\documentclass[11pt,centertags]{amsart}
\usepackage{stmaryrd}
\usepackage{amscd}
\usepackage{easybmat}
\usepackage{mathrsfs}
\usepackage{amsfonts}
\usepackage{color}
\usepackage{pifont}
\usepackage{upgreek}
\usepackage{bm}
\usepackage{hyperref}
\usepackage{shorttoc}
\usepackage{amsmath,amstext,amsthm,a4,amssymb,amscd}
\usepackage[mathscr]{eucal}
\usepackage{mathrsfs}
\usepackage{epsf}
\textwidth 15.8cm
\topmargin 0.5cm
\oddsidemargin 0.1cm
\evensidemargin 0.1cm
\parskip 0.0cm

\numberwithin{equation}{section}

\def\p{\partial}

\def\b{\bar}

\def\mc{\mathcal}

\def\w{\wedge}

\def\t{\triangle}

\newtheorem{thm}{Theorem}[section]
\newtheorem{lemma}[thm]{Lemma}
\newtheorem{prop}[thm]{Proposition}

\theoremstyle{definition}
\newtheorem{rem}[thm]{Remark}

\theoremstyle{definition}
\newtheorem{defn}[thm]{Definition}
\newcommand{\comment}[1]{}
\begin{document}
\title{A Donaldson type functional on a holomorphic Finsler vector bundle}
\author[Huitao Feng]{Huitao Feng$^1$}
\author[Kefeng Liu]{Kefeng Liu$^2$}
\author[Xueyuan Wan]{Xueyuan Wan$^3$}
\address{Huitao Feng: Chern Institute of Mathematics \& LPMC,
Nankai University, Tianjin, China}

\email{fht@nankai.edu.cn}

\address{Kefeng Liu: School of Mathematics, Capital Normal University and Department of Mathematics, UCLA, Los Angeles, USA}

\email{liu@math.ucla.edu}

\address{Xueyuan Wan: Chern Institute of Mathematics \& LPMC,
Nankai University, Tianjin, China}

\email{xywan@mail.nankai.edu.cn}

\thanks{$^1$~Partially supported by NSFC (Grant No. 11221091, 11271062, 11571184)}

\thanks{$^3$~Partially supported by NSFC (Grant No. 11221091, 11571184)
and the Ph.D. Candidate Research Innovation Fund of Nankai University}

\begin{abstract}
  In this paper, we solve a problem of Kobayashi posed in \cite{Ko4} by introducing a Donaldson type functional on the space $F^+(E)$ of strongly pseudo-convex complex Finsler metrics on $E$ -- a holomorphic vector bundle over a closed K\"{a}hler manifold $M$. This Donaldson type functional is a generalization in the complex Finsler geometry setting of the original Donaldson functional and has Finsler-Einstein metrics on $E$ as its only critical points, at which this functional attains the absolute minimum.
\end{abstract}
\maketitle
\tableofcontents
\section*{Introduction}\label{s0}

The stability of holomorphic vector bundles is a concept in algebraic geometry, which was initiated by Mumford for constructing good moduli spaces for holomorphic vector bundles over algebraic curves, see \cite{Mum-F}. Later this notion was generalized to coherent sheaves by Takemoto \cite{Ta}, Bogomolov \cite{Bogo} and Gieseker \cite{Giese1}, \cite{Giese2}, etc. A differential geometric analogue to stability is the concept of a Hermitian-Einstein vector bundle over a closed K\"{a}hler manifold. The relationship between stability and the existence of Hermitian-Einstein metrics was realized by Yau soon after his celebrated work on Calabi conjecture. More precisely, the correspondence between stability and the existence of Hermitian-Einstein metrics asserts that a holomorphic vector bundle over a closed K\"{a}hler manifold is polystable if and only if it admits a Hermitian-Einstein metric. Note that early in 1965 Narasimhan and Seshadri \cite{Nara} considered the case of compact Riemannian surfaces, and in 1980s Kobayashi \cite{Ko2} and L\"{u}bke \cite{Lubke} proved that a holomorphic bundle admitting a Hermitian-Einstein metric must be polystable. The truly hard part of this correspondence is the reversed problem and this was solved by Donaldson \cite{Don1}, \cite{Don2} for algebraic manifolds, and by Uhlenbeck and Yau \cite{Yau} for general K\"{a}hler manifolds. This well-known Donaldson-Uhlenbeck-Yau theorem states that the stability of a holomorphic vector bundle implies the existence of Hermitian-Einstein metric.

For the semi-stable case, Mabuchi pointed out explicitly the notion of approximate Hermitian-Einstein metrics, which was already implicit in Donaldson's works. Kobayashi proved that the existence of the approximate Hermitian-Einstein structure implies semi-stable for a holomorphic vector bundle. Moreover, over the projective algebraic manifolds, he solved the reversed part, that is, semi-stability implies the existence of the approximate Hermitian-Einstein structure. He conjectured that this result should be true for general closed K\"{a}hler manifolds. In \cite{Jacob}, A. Jacob gave a proof of this conjecture.

In \cite{Ko4} Kobayashi extended the concept of the Hermitian-Einstein metric to the setting of complex Finsler geometry. He introduced the definition of a Finsler-Einstein metric on a holomorphic vector bundle in terms of the Kobayashi curvature, as named in our paper \cite{FLW}, of a Finsler metric. Furthermore, Kobayashi raised in \cite{Ko4} the following open problem:

{\bf Problem:\,}\emph{What are the algebraic geometric consequences of the Finsler-Einstein condition? The first question in
this regard is whether every Einstein-Finsler vector bundle is semi-stable or not?}

Note that T. Aikou \cite{Aikou1} also introduced a definition for Finsler-Einstein vector bundles. When the Finsler metric comes from a Hermitian metric, his definition coincides with Kobayashi's. In general case, Kobayashi's notion is weaker than Aikou's. In \cite{FLW}, we proved, under an extra assumption, that a Finsler-Einstein vector bundle is semistable by using the Chern forms constructed in \cite{FLW}.

In this paper, we always assume that $M$ is a closed $n$-dimensional K\"{a}hler manifold with a K\"{a}hler metric $\omega$, and $E$ is a rank $r$ holomorphic vector bundle over $(M,\omega)$, and we use $F^+(E)$ to denote the space of all strongly pseudo-convex Finsler metrics on $E$.

Motivated by Donaldson's works on Hermitian-Einstein metrics (cf. \cite{Don1}, \cite{Don2}), we introduce a Donaldson type functional $\mathcal{L}$ on $F^+(E)$, which is a natural generalization of the famous Donaldson functional in the Hermitian-Einstein case. In this paper we solve the above Kobayashi problem by proving the following theorem,
\begin{thm}\label{0.1} Let $E$ be a holomorphic vector bundle over a closed K\"{a}hler manifold $(M,\omega)$. If $E$ admits a Finsler-Einstein
metric, then $E$ must admit an $\omega$-approximate Hermitian-Einstein structure. As a consequence, $E$ is $\omega$-semistable.
\end{thm}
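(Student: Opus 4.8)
The plan is to route everything through the functional $\mathcal{L}$ together with the inclusion $\mathrm{Herm}^{+}(E)\subset F^{+}(E)$ of Hermitian metrics, thereby reducing the first assertion to the Hermitian Kobayashi--Hitchin machinery; the second assertion is then exactly the (cited) theorem of Kobayashi that an $\omega$-approximate Hermitian--Einstein structure forces $\omega$-semistability.

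\textbf{Step 1 ($\mathcal{L}$ is bounded below).} By hypothesis $E$ carries a Finsler--Einstein metric $G$, and by the property of $\mathcal{L}$ recalled above it is a critical point, hence a global minimizer, with $\mathcal{L}(G)$ finite. Therefore $\mathcal{L}(G')\ge \mathcal{L}(G)>-\infty$ for every $G'\in F^{+}(E)$; a fortiori $\mathcal{L}$ is bounded below on the subspace $\mathrm{Herm}^{+}(E)$ of (Finsler metrics induced by) Hermitian metrics.

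\textbf{Step 2 (restriction to Hermitian metrics).} When a Finsler metric is induced by a Hermitian metric $H$ on $E$, its Kobayashi curvature is the curvature $F_{H}$ of the Chern connection, so its mean Kobayashi curvature is $K_{H}=\sqrt{-1}\,\Lambda_{\omega}F_{H}$ and the Finsler--Einstein equation specializes to the Hermitian--Einstein equation $K_{H}=c\,\mathrm{Id}_{E}$. Working from the explicit construction of $\mathcal{L}$, one checks that along a path in $\mathrm{Herm}^{+}(E)$ the first variation of $\mathcal{L}$ equals that of the classical Donaldson functional $\mathcal{M}(H_{0},\cdot\,)$ (for any fixed reference $H_{0}$), so the two differ by a constant on $\mathrm{Herm}^{+}(E)$. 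Combined with Step 1, this says $\mathcal{M}(H_{0},\cdot\,)$ is bounded below on $\mathrm{Herm}^{+}(E)$. I expect this identification -- unwinding the Finsler-geometric definition of $\mathcal{L}$ and specializing it to the Hermitian locus -- to be the main point requiring care.

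\textbf{Step 3 (approximate Hermitian--Einstein structure).} Here I would invoke the standard Donaldson heat-flow argument (cf. Donaldson \cite{Don1}, \cite{Don2}). Let $c$ be the Hermitian--Einstein constant determined by $c_{1}(E)$ and $[\omega]$ through Chern--Weil; this equals the Finsler--Einstein constant, since applying $\mathrm{tr}$ and $\Lambda_{\omega}$ to the Finsler--Einstein equation and integrating over $M$ produces the same value. Run the flow $H_{t}^{-1}\dot H_{t}=-(K_{H_{t}}-c\,\mathrm{Id})$ from $H_{0}$; it exists for all $t\ge 0$ on the closed K\"{a}hler manifold $M$, and $\tfrac{d}{dt}\mathcal{M}(H_{0},H_{t})=-\|K_{H_{t}}-c\,\mathrm{Id}\|_{L^{2}}^{2}$, so the lower bound from Step 2 forces $\int_{0}^{\infty}\|K_{H_{t}}-c\,\mathrm{Id}\|_{L^{2}}^{2}\,dt<\infty$. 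Moreover $u_{t}:=|K_{H_{t}}-c\,\mathrm{Id}|^{2}$ is a non-negative subsolution of the heat equation on $(M,\omega)$, by Donaldson's Bochner identity $(\partial_{t}-\Delta)u_{t}=-2|\bar\partial_{H_{t}}(K_{H_{t}}-c\,\mathrm{Id})|^{2}\le 0$, so the parabolic mean-value inequality gives $\sup_{M}u_{t}\le C\int_{t-1}^{t}\|K_{H_{s}}-c\,\mathrm{Id}\|_{L^{2}}^{2}\,ds\to 0$ as $t\to\infty$. Hence $\sup_{M}|K_{H_{t}}-c\,\mathrm{Id}|\to 0$, so for every $\varepsilon>0$ a metric $H_{t}$ with $t$ large is $\varepsilon$-approximately Hermitian--Einstein; that is, $E$ admits an $\omega$-approximate Hermitian--Einstein structure.

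\textbf{Step 4 (semistability).} Finally, by Kobayashi's theorem a holomorphic bundle admitting an $\omega$-approximate Hermitian--Einstein structure is $\omega$-semistable: for any coherent subsheaf $\mathcal{S}$ with $0<\mathrm{rk}\,\mathcal{S}<r$, estimating $\deg_{\omega}\mathcal{S}$ via the second fundamental form of $\mathcal{S}$ with respect to an $\varepsilon$-approximate Hermitian--Einstein metric yields $\mu_{\omega}(\mathcal{S})\le \mu_{\omega}(E)+O(\varepsilon)$, and letting $\varepsilon\to 0$ gives $\mu_{\omega}(\mathcal{S})\le \mu_{\omega}(E)$. This completes the argument; the delicate ingredient remains Step 2, since Steps 1, 3 and 4 are, respectively, a formal consequence of the variational property of $\mathcal{L}$ and two classical facts of Kobayashi--Hitchin theory.
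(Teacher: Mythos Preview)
Your proposal is correct and follows essentially the same route as the paper: show that $\mathcal{L}$ attains its absolute minimum at the Finsler--Einstein metric (this is Theorem~\ref{p.2}), identify $\mathcal{L}$ with the classical Donaldson functional $\mathcal{M}$ on $Herm^+(E)$ (this is Proposition~\ref{pp}), and then invoke Kobayashi's Theorem~6.10.13 in \cite{Ko3} that a lower bound on $\mathcal{M}$ forces an approximate Hermitian--Einstein structure and hence $\omega$-semistability. The only differences are cosmetic: the paper proves $\mathcal{L}=\mathcal{M}$ exactly on $Herm^+(E)$ by a direct fiber-integral computation (rather than ``up to a constant'' via first variations, which would in any case require the same computation), and it black-boxes your Steps~3 and~4 as a single citation to \cite{Ko3}.
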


A natural question emerged is whether a semi-stable holomorphic vector bundle over a closed K\"{a}hler manifold admits a Finsler-Einstein metric. A possible approach to this problem is to have a more comprehensive understanding of Donaldson's method in the Hermitian-Einstein case.

This paper is arranged as follows. In Section 1, we review briefly some definitions, notations in complex Finsler geometry used in this paper. In Section 2, by introducing a metric on $F^+(E)$, we introduce an energy functional $\mathcal{E}(G)$ on the path space $\Omega(G_0,G_1)$ of all piecewise smooth curves $G_t$, $0\leq t\leq 1$, in $F^+(E)$ with the two fixed endpoint $G_0,G_1\in F^+(E)$. Then we compute the first variation of this energy functional, from which a geodesic equation on the space $F^+(E)$ is derived and a family case of the geodesic approximate lemma in Chen's paper (cf. \cite{Chen}, Lemma 7) is given. In Section 3, we introduce a Donaldson type functional $\mathcal{L}$ on $F^+(E)$ and compute the first and second variations of the functional $\mathcal{L}$. By these variation formulas, we can prove that the Finsler-Einstein metrics on $E$ coincide with the critical points of this functional, and at which the Donaldson type functional $\mathcal{L}$ attains the absolute minimum, from which our main result Theorem 0.1 follows.
$$ \ $$
\noindent{\bf Acknowledgement:} The authors would like to thank Professor Jixiang Fu in Fudan University and Professor Xi Zhang in University of Science and Technology of China for very helpful discussions on the complex Monge-Amp\`{e}re equations. The first author would like to thank Professor Weiping Zhang in Chern Institute of Mathematics, Nankai University for the encouragement on his study of Finsler geometry.

\section{A brief review of complex Finsler vector bundles}\label{brief}

In this section we give a brief review of holomorphic Finsler vector bundles. One can consult with the references \cite{Aikou2}, \cite{Cao-Wong}, \cite{FLW}, \cite{Ko1}, \cite{Ko3} for more detail. In this paper, we will use the notations as in \cite{FLW}.

Let $\pi:E\to M$ be a rank $r$ holomorphic vector bundle over an n-dimensional complex manifold $M$. Let $G$ be a strongly pseudo-convex complex Finsler metric on $E$, that is, $G$ is a non-negative continuous function on $E$ verifying the following conditions
\begin{description}
  \item [F1)] $G$ is smooth on $E^o=E\setminus O$, where $O$ denotes the zero section of $E$;
  \item[F2)] $G(z,v)\geq 0$ for all $(z,v)\in E$ with $z\in M$ and $v\in\pi^{-1}(z)$, and $G(z,v)=0$ if and only if $v=0$;
  \item[F3)] $G(z,\lambda v)=|\lambda|^2G(z,v)$ for all $\lambda\in\mathbb{C}$;
  \item[F4)] the Levi form ${\sqrt{-1}}\partial\bar\partial G$ on $E^o$ is positive-definite along fibres $E_z=\pi^{-1}(z)$ for $z\in M$.
\end{description}

Let $P(E)$ denote the projectivized space of $E$, which is a holomorphic fibration $\pi:P(E)\to M$ with the fibres $P(E_z)={(E_z\setminus \{0\})}/{\mathbb{C}^*}$, $z\in M$. By the Kobayashi correspondence as named in \cite{FLW}, every strongly pseudo-convex complex Finsler metric $G$
induces a Hermitian metric on the tautological line bundle $\mathcal{O}_{P(E)}(-1)\to P(E)$, which we will still denote by $G$. So $G^{-1}$ gives a dual metric on the dual line bundle $\mathcal{O}_{P(E)}(1)$ and the first Chern form of $(\mathcal{O}_{P(E)}(1),G^{-1})$ is given by
\begin{align}\label{1.0}
\Xi(G)={{\sqrt{-1}}\over{2\pi}}\partial\bar\partial\log G.
\end{align}
Moreover, one has (cf. \cite{FLW})
\begin{align}\label{1.00}
\Xi(G)=-{1\over{2\pi}}\Psi(G)+\omega_{FS}(G),\quad \int_{P(E)/M}\Xi(G)^{r-1}=1,
\end{align}
where $\Psi(G)$ is the Kobayashi curvature of $G$ and $\omega_{FS}(G)$ is a well-defined vertical real $(1,1)$-form on $P(E)$; when restricted to each fibre $P(E_z)$ of $P(E)$, the real $(1,1)$-form $2\pi\omega_{FS}(G)$ gives a K\"{a}hler metric on $P(E_z)$, for any $z\in M$. So the vertical holomorphic tangent bundle $\tilde{\mathcal{V}}$ of $P(E)$ is a Hermitian vector bundle. In this paper, we often omit  $G$ from the notation $\Xi(G)$, $\Psi(G)$ and $\omega_{FS}(G)$. In local homogenous coordinate systems $(z^\alpha,v^i)$ on $P(E)$, one has
\begin{align}\label{1.1}
\Psi={\sqrt{-1}}K_{i\bar j \alpha\bar\beta}{{v^i\bar v^j}\over G}dz^\alpha\wedge d\bar z^\beta,\ \omega_{FS}={{\sqrt{-1}}\over{2\pi}}{{\partial^2\log G}\over{\partial v^i\partial\bar v^j}}\delta v^i\wedge\delta\bar v^j,
\end{align}
where
\begin{align}\label{1.2}
\delta v^i=dv^i+\Gamma^i_{j\alpha}v^jdz^\alpha,\ \Gamma^i_{j\alpha}={{\partial G_{j\bar k}}\over{\partial z^\alpha}}G^{\bar k i},\ K_{i\bar j \alpha\bar\beta}=-G_{i\bar j \alpha\bar\beta}+G^{k\bar l}G_{i\bar l\alpha}G_{k\bar j\bar\beta}.
\end{align}

For any local homogenous coordinate systems $(z^\alpha,v^i)$ on $P(E)$ and any smooth real function $f$ on $P(E)$, set
\begin{align}\label{1.2222}
\p^{\mathcal{V}}f=\sum^r_{i=1}{{\partial f}\over{\partial v^i}}\delta v^i,\
\p^{\mathcal{H}}f=\sum^n_{\alpha=1}{{\delta f}\over{\delta z^\alpha}}dz^\alpha,\quad\forall\ f\in C^\infty(P(E)),
\end{align}
where
\begin{align}\label{1.2223}
{{\delta}\over{\delta z^\alpha}}={{\partial}\over{\partial z^\alpha}}-\Gamma^i_{j\alpha}v^j{{\partial}\over{\partial v^i}}.
\end{align}
The one-forms $\p^{\mathcal{V}}f$ and $\p^{\mathcal{H}}f$ do not depend on the local homogenous coordinate $(z^\alpha,v^i)$. Hence $\p^{\mathcal{V}}$ and $\p^{\mathcal{H}}$ are two well-defined operators acting on $C^\infty(P(E))$, the space of smooth real functions on $P(E)$, and are called the vertical and the horizontal derivatives, respectively. We will use $\|\p^\mathcal{V}f\|$ to  denote the (vertical) norm of $\p^\mathcal{V}f$.
Furthermore, when $M$ is a K\"{a}hler manifold with a K\"{a}hler form $\omega={\sqrt{-1}}g_{\alpha\bar\beta}dz^\alpha\wedge d\bar z^\beta$, the pull-back of $\omega$ is a horizontal real $(1,1)$-form and equips the horizontal tangent bundle $\tilde{\mathcal{H}}$ of $P(E)$ a Hermitian metric.
By this one can define the (horizontal) norm $\|\p^{\mathcal{H}}f\|$ of $\p^{\mathcal{H}}f$. In a local homogenous coordinate system $(z^\alpha,v^i)$
of $P(E)$, one has
\begin{align}\label{1.22233}
\|\p^\mathcal{V}f\|^2=GG^{i\b j}{\frac{\p f}{\p v^i}}{\frac{\p f}{\p\b v^j}},\quad \|\p^\mathcal{H}f\|^2=g^{\alpha\b{\beta}}\frac{\delta f}{\delta z^{\alpha}}\frac{\delta f}{\delta \b{z}^{\beta}}.
\end{align}
We have the following lemma for these two norms.
\begin{lemma}\label{l22} For any $f\in C^\infty(P(E))$, one has
\begin{align}\label{1.aaa}
\frac{(r-1)\sqrt{-1}}{2\pi}(\p^\mathcal{V}f\wedge\b\p^\mathcal{V} f)\w\omega_{FS}^{r-2}=\|\p^{\mathcal{V}}f\|^2\omega_{FS}^{r-1},
\end{align}
\begin{align}\label{1.bbb}
n{\sqrt{-1}}(\p^\mathcal{H}f\wedge\b\p^\mathcal{H}f)\w\omega^{n-1}=\|\p^{\mathcal{H}}f\|^2\omega^n.
\end{align}
\end{lemma}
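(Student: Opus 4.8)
The plan is to verify the two pointwise identities \eqref{1.aaa} and \eqref{1.bbb} by working in a single local homogeneous coordinate system $(z^\alpha,v^i)$ on $P(E)$ and choosing, at a fixed point, adapted coordinates in which the relevant Hermitian matrices are the identity. Both identities are $C^\infty(P(E))$-bilinear and tensorial in $f$ (only first derivatives of $f$ enter on each side), so it suffices to check them fibrewise and horizontally, separately.

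For \eqref{1.aaa}, I would fix a point $p\in P(E)$ lying over $z\in M$ and pick homogeneous fibre coordinates $v^i$ so that at $p$ the vertical Hermitian matrix $h_{i\bar j}:=\frac{1}{2\pi}\frac{\partial^2\log G}{\partial v^i\partial\bar v^j}$ equals $\delta_{i\bar j}$ (on the $(r-1)$-dimensional fibre $P(E_z)$, after passing to an affine chart one has $r-1$ independent coordinates; equivalently one uses that $\omega_{FS}$ is a well-defined $(1,1)$-form of rank $r-1$). Then $\omega_{FS}=\sqrt{-1}\sum_{i}\delta v^i\wedge\delta\bar v^i$ at $p$, and $\p^\mathcal{V}f=\sum_i f_{v^i}\,\delta v^i$ with $f_{v^i}=\partial f/\partial v^i$. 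A direct computation of $\sqrt{-1}(\p^\mathcal{V}f\wedge\bar\p^\mathcal{V}f)\wedge\omega_{FS}^{r-2}$ using the standard combinatorial identity $\sqrt{-1}\,dw^a\wedge d\bar w^b\wedge(\sqrt{-1}\sum_c dw^c\wedge d\bar w^c)^{m-1}=\frac{(m-1)!}{m}\delta^{ab}(\sqrt{-1}\sum_c dw^c\wedge d\bar w^c)^{m}$ — here with $m=r-1$ — produces exactly $\frac{1}{r-1}\sum_i|f_{v^i}|^2\,\omega_{FS}^{r-1}$ at $p$, and since $\|\p^\mathcal{V}f\|^2=GG^{i\bar j}f_{v^i}f_{\bar v^j}=\sum_i|f_{v^i}|^2$ at $p$ (because $GG^{i\bar j}$ is, up to the $2\pi$ normalization absorbed into $h$, the inverse of $h_{i\bar j}$ by \eqref{1.22233} and \eqref{1.1}), multiplying by $(r-1)$ gives \eqref{1.aaa} at $p$. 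As $p$ is arbitrary and both sides are globally defined $(r-1,r-1)$-forms on each fibre, the identity holds everywhere.

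For \eqref{1.bbb} the argument is the analogous and simpler one: at a fixed point choose $z^\alpha$ so that $g_{\alpha\bar\beta}=\delta_{\alpha\bar\beta}$, so $\omega=\sqrt{-1}\sum_\alpha dz^\alpha\wedge d\bar z^\alpha$ there, write $\p^\mathcal{H}f=\sum_\alpha\frac{\delta f}{\delta z^\alpha}dz^\alpha$, and apply the same linear-algebra identity with $m=n$ to get $n\sqrt{-1}(\p^\mathcal{H}f\wedge\bar\p^\mathcal{H}f)\wedge\omega^{n-1}=\sum_\alpha|\frac{\delta f}{\delta z^\alpha}|^2\,\omega^n=\|\p^\mathcal{H}f\|^2\omega^n$ by \eqref{1.22233}. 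One should note that although $\p^\mathcal{H}f$ involves the horizontal frame $dz^\alpha$ rather than a closed $1$-form, the wedge-product identity is purely algebraic in the $(1,1)$-form $\omega$ and the $(1,0)$-form $\p^\mathcal{H}f$ at the point, so the non-closedness of $dz^\alpha$ along the horizontal distribution is irrelevant.

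The only genuine subtlety — and the step I expect to require the most care — is the bookkeeping in the vertical case: one must be careful that $\omega_{FS}$ has fibre rank $r-1$ rather than $r$ (the Euler vector field $v^i\partial/\partial v^i$ is in its kernel), so the combinatorial identity must be applied on the quotient, i.e. in an affine chart on $P(E_z)$, and the factor $(r-1)$ (not $r$) on the left of \eqref{1.aaa} is exactly what compensates the $\frac{m-1}{m}=\frac{r-2}{r-1}$ coming from the identity together with the fact that $\p^\mathcal{V}f$, when expressed in affine coordinates, also lives on the quotient. Checking that $GG^{i\bar j}\frac{\partial f}{\partial v^i}\frac{\partial f}{\partial\bar v^j}$ agrees with $\sum|f_{v^i}|^2$ in the chosen affine normalization — i.e. that the stated formula \eqref{1.22233} for $\|\p^\mathcal{V}f\|^2$ is consistent with $\omega_{FS}$ being the associated $(1,1)$-form — is the point where the homogeneity condition F3) and the definition \eqref{1.1} must be invoked explicitly; once this is pinned down, both identities drop out.
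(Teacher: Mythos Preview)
Your proposal is correct and follows essentially the same route as the paper: a pointwise linear-algebra computation in an orthonormal frame, using the standard identity for a $(1,0)$-form wedged with $\bar{(\cdot)}$ and a power of a positive $(1,1)$-form. The only difference is cosmetic. The paper picks, at the given point, an abstract unitary basis $V_1,\dots,V_{r-1}$ of the $(r-1)$-dimensional vertical tangent space $\tilde{\mathcal V}$ with dual coframe $\psi^1,\dots,\psi^{r-1}$, so that $\omega_{FS}=\tfrac{\sqrt{-1}}{2\pi}\sum\psi^i\wedge\bar\psi^i$ and $\p^{\mathcal V}f=\sum(V_if)\psi^i$ automatically live in the correct dimension; this makes the ``genuine subtlety'' you flag (homogeneous vs.\ affine coordinates, the Euler direction, the rank $r-1$ vs.\ $r$) disappear entirely, and the combinatorial step is then a two-line expansion. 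Your coordinate approach reaches the same conclusion but carries the extra bookkeeping you describe; adopting the intrinsic frame would let you drop that paragraph.
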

\begin{proof} We only prove (\ref{1.aaa}), while (\ref{1.bbb}) is similar. Since the equality (\ref{1.aaa}) is pointwise, we will verify it at any point $(z,[v])\in P(E)$. Let $V_1,\cdots,V_{r-1}$ be a unitary basis of $\tilde{\mathcal{V}}|_{(z,[v])}$ with respect to $2\pi\omega_{FS}$, and let $\psi^1,\cdots,\psi^{r-1}$ be the dual basis. Then at the point $(z,[v])\in P(E)$ we have
\begin{align}\label{1.ccc}
\omega_{FS}=\frac{{\sqrt{-1}}}{2\pi}(\psi^1\w\b\psi^1+\cdots+\psi^{r-1}\w\b\psi^{r-1}).
\end{align}
For any $f\in C^\infty(P(E))$, we have
\begin{align}\label{1.cccc}
\p^\mathcal{V}f=\sum_{i=1}^{r-1}(V_if)\psi^i,\quad \b\p^\mathcal{V}f=\sum_{i=1}^{r-1}(\b V_if)\b\psi^i.
\end{align}
Clearly, one has
\begin{align}\label{1.ccccc}
\|\p^\mathcal{V}f\|^2=\sum_{i=1}^{r-1}|V_if|^2,\quad \p^\mathcal{V}f\wedge\b\p^\mathcal{V}f=\sum^{r-1}_{i=1}|V_if|^2\psi^i\w\b\psi^i+\sum_{i\neq j}(V_if)(\b V_jf)\psi^i\w\b\psi^j.
\end{align}
Now (\ref{1.aaa}) follows from the following direct computations:
\begin{align*}
&\frac{(r-1){\sqrt{-1}}}{2\pi}(\p^\mathcal{V}f\wedge\b\p^\mathcal{V} f)\w\omega_{FS}^{r-2}\\
&=(r-1)(\frac{{\sqrt{-1}}}{2\pi})^{r-1}\left(\sum^{r-1}_{i=1}|V_if|^2\psi^i\w\b\psi^i+\sum_{i\neq j}(V_if)(\b V_jf)\psi^i\w\b\psi^j\right)(\sum^{r-1}_{i=1}\psi^i\w\b\psi^i)^{r-2}\\
&=(r-1)(\frac{{\sqrt{-1}}}{2\pi})^{r-1}(\sum^{r-1}_{i=1}|V_if|^2\psi^i\w\b\psi^i)(\sum^{r-1}_{i=1}\psi^i\w\b\psi^i)^{r-2}\\
&=(r-1)!(\frac{{\sqrt{-1}}}{2\pi})^{r-1}\sum^{r-1}_{i=1}|V_if|^2\psi^1\w\b\psi^1\w\cdots\w\psi^{r-1}\w\b\psi^{r-1}\\
&=\sum^{r-1}_{i=1}|V_if|^2\omega_{FS}^{r-1}=\|\p^{\mathcal{V}}f\|^2\omega_{FS}^{r-1}.
\end{align*}
\end{proof}
In \cite{Ko4} Kobayashi introduced a notion of a Finsler-Einstein metric on $E\to (M,\omega)$ by using the Kobayashi curvature $\Psi$.
\begin{defn}\label{1.886}(Kobayashi \cite{Ko4}) Let $(M,\omega)$ be a closed K\"{a}hler manifold with the K\"{a}hler form $\omega={\sqrt{-1}}g_{\alpha\bar\beta}dz^\alpha\wedge d\bar z^\beta$. Let $E$ be a holomorphic vector bundle over $M$. A strongly pseudo-convex Finsler metric $G$ on $E$ is said to be Finsler-Einstein if there exists some constant $\lambda$ such that
\begin{align}\label{d1.10}
tr_{\omega}\Psi:=g^{\alpha\bar\beta}K_{i\bar j \alpha\bar\beta}{{v^i\bar v^j}\over G}=\lambda.
  \end{align}
\end{defn}
As in the Hermitian-Einstein case, the constant $\lambda$ in (\ref{d1.10}), if exists, must be
\begin{align}\label{2.000}
\lambda=\frac{2\pi n}{r}{{\int_{M}c_1(E)\w\omega^{n-1}}\over{\int_{M}\omega^{n}}}.
\end{align}
Clearly $\lambda$ only depends on the first Chern class $c_1(E)$ and the K\"{a}hler class $[\omega]$ on $M$.
\begin{rem}
  If the Finsler metric $G$ comes from a Hermitian metric $h$, then $K_{i\b{j}\alpha\b{\beta}}$ is independent of the fiber coordinates. Differentiating both sides of the formula $g^{\alpha\b{\beta}}K_{i\b{j}\alpha\b{\beta}}v^i\b{v}^j=\lambda G$ with respect to $v^i$, we get
  $g^{\alpha\b{\beta}}K_{i\b{j}\alpha\b{\beta}}=\lambda G_{\alpha\b{\beta}}$ or $g^{\alpha\b{\beta}}K^i_{\b j\alpha\b{\beta}}=\lambda \delta^i_j$. So $G$ is a Hermitian-Einstein metric on $E$ (cf. \cite{Ko3}). Here the tensor $K_{i\bar j}:=g^{\alpha\b{\beta}}K_{i\b j\alpha\b{\beta}}$ or the endomorphism $K(h)=(K^i_{\b j})=(g^{\alpha\b{\beta}}K^i_{\b j\alpha\b{\beta}})$ is usually called the mean curvature of the Hermitian metric $h$.
\end{rem}
Finally, we recall the notion of the $\omega$-approximate Hermitian-Einstein metric (cf. \cite{Ko3}).
\begin{defn}\label{App def} Let $E$ be a holomorphic Hermitian vector bundle over a closed K\"{a}hler manifold $(M,\omega)$.
The bundle $E$ is said to admit an $\omega$-approximate Hermitian-Einstein structure if for every positive $\epsilon$, there is a Hermitian structure $h_\epsilon$ such that the mean curvature $K(h_\epsilon)$ verifies that
\begin{align}\label{1.999}
\max_M|K(h_\epsilon)-\lambda I_E|<\epsilon,
\end{align}
where $\lambda$ is given by (\ref{2.000}), and $|K-\lambda I_E|$ is defined as
\begin{align}\label{1.9991}
|K-\lambda I_E|^2=tr((K-\lambda I_E)\circ(K-\lambda I_E)).
\end{align}
Such a family of metrics $h_\epsilon$ is called an $\omega$-Hermitian-Einstein structure on $E$.
\end{defn}

\section{An Energy functional $\mathcal{E}(G)$ on the path spaces $\Omega(G_0,G_1)$}\label{space}

In this section, we introduce an energy functional $\mathcal{E}(G)$ on the path space $\Omega(G_0,G_1)$ of all piecewise smooth curves $G_t$, $0\leq t\leq 1$, in $F^+(E)$ with the two fixed endpoint $G_0,G_1\in F^+(E)$. We will compute the first variation of $\mathcal{E}(G)$, from which a geodesic equation on the space $F^+(E)$ is derived and a family version of the geodesic approximation lemma in \cite{Chen} is given.

Let $F(E)$ be the space consisting of real continuous functions $H:E\to \mathbb{R}$ verifying the conditions ${\bf F1), F3)}$ given in Section 1. Clearly, $F(E)$ can be identified with $C^\infty(P(E))$ and so is a real topological vector space of infinite dimension. By the strongly pseudo-convex condition {\bf F4)}, the space $F^+(E)$ is a path-connected open subset in $F(E)$. Note that for any $G\in F^+(E)$, $F(E)$ can be  identified naturally with the tangent space $T_GF^+(E)$ of $F^+(E)$ at $G$, and when restricted to the fibre $P(E_z)$ of $P(E)$ for each $z\in M$, the $(r-1,r-1)$-form $\Xi^{r-1}$ gives a normalized volume form on $P(E_z)$ by (\ref{1.00}). Recall that $\omega$ is the K\"{a}helr form on the closed K\"{a}hler manifold $M$. So we can define a Riemannian metric on $F^+(E)$ as following: for any $G\in F^+(E)$ and any $H_1,H_2\in F(E)\equiv T_GF^+(E)$, we define $(H_1, H_2):=(H_1, H_2)_G$ by
\begin{align}\label{1.3}
   (H_1,H_2)_G=\int_M\langle H_1, H_2\rangle_G\frac{\omega^{n}}{n!},
 \end{align}
where
 \begin{align}\label{1.4}
 \langle H_1, H_2\rangle_G=(r+1)\int_{P(E)/M}{\frac{H_1}{G}}{\frac{H_2}{G}}\Xi^{r-1}
   -r\int_{P(E)/M}\frac{H_1}{G}\Xi^{r-1}\int_{P(E)/M}\frac{H_2}{G}\Xi^{r-1}.
\end{align}
Note that the positivity of $(\cdot,\cdot)$ comes from
   \begin{align*}
   \begin{split}
     \langle H_1, H_1\rangle_G&=\int_{P(E)/M}(\frac{H_1}{G})^2\Xi^{r-1}+r\left(\int_{P(E)/M}(\frac{H_1}{G})^2\Xi^{r-1}-
     (\int_{P(E)/M}\frac{H_1}{G}\Xi^{r-1})^2\right)\\
     &\geq \int_{P(E)/M}(\frac{H_1}{G})^2\Xi^{r-1},
     \end{split}
   \end{align*}
where the last inequality is by the Cauchy-Schwarz inequality and $\int_{P(E)/M}\Xi^{r-1}=1$.

In the rest of this paper, we will often omit the subscript $G$ from the inner product defined above.
\begin{rem}\label{r111} As in \cite{Mab}, the space $F^+(E)$ can be viewed as an infinite dimensional Riemannian manifold with the Riemannian metric $(\cdot,\cdot)_G$. For any smooth path $G_t,\ 0\leq t\leq 1$, in $F^+(E)$, we can define the covariant derivative along the path $G_t$ by
\begin{align}\label{3.99999}
  \frac{D}{\p t}:=\frac{\p}{\p t}-\frac{1}{2} G^{i\b{j}}\left(\frac{\p^2 G}{\p \b{v}^j\p t}\frac{\p}{\p v^i}+\frac{\p^2 G}{\p v^i\p t}\frac{\p}{\p \b{v}^j}\right).
\end{align}
We can prove this connection is metric-preserving and torsion-free, and moreover, the curvature of this connection is given by
 \begin{align}\label{3.99998}
    R(H_1,H_2)_G(H_3)=-\frac{1}{4}[[H_1,H_2]_G,H_3]_G
  \end{align}
for all $H_1, H_2, H_3\in F(E)$ and $G\in F^+(E)$, where
\begin{align}\label{3.99997}
  [H_1,H_2]_G:=G^{i\b{j}}\left(\frac{\p H_1}{\p v^i}\frac{\p H_2}{\p\b{v}^j}-\frac{\p H_2}{\p v^i}\frac{\p H_1}{\p\b{v}^j}\right).
  \end{align}
We will leave the further study on this issue to another paper.
\end{rem}
Using the $L^2$ metric $(\cdot,\cdot)$ defined above, we introduce the following energy functional $\mathcal{E}(G)$ on $\Omega(G_0,G_1)$ on $F^+(E)$ for any two fixed point $G_0,G_1\in F^+(E)$:
\begin{align}\label{1.6}
\mc{E}(G)=\frac{1}{2}\int_0^1|\p_t G|^2 dt,
\end{align}
where $|\p_t G|$ is the $L^2$-norm of $\p_t G:={\p G}/{\p t}\in F(E)$ defined by (\ref{1.3}).

A critical point of $\mathcal{E}(G)$ is said to be a geodesic in $F^+(E)$ joint $G_0$ and $G_1$.

Next, we compute the first variation of the energy functional $\mathcal{E}(G)$. Giving any infinitimal variation $V=V(t)$, which is a smooth curve in
$F(E)$ with $V(0)=V(1)=0$, we have the following first variation formula of $\mathcal{E}(G)$
\begin{thm}\label{t1.1}
\begin{align}\label{1.7}
{{\frac{d\mathcal{E}(G+sV)}{ds}}}|_{s=0}=-\int_0^1\left(G(\p_tv_t-\left\|\p^{\mathcal{V}}v_t\right\|^2),V\right)dt,
\end{align}
where $\partial_t$ denotes ${\partial\over{\partial t}}$ and $v_t=\partial_t\log G$.
\end{thm}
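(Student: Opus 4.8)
The plan is to differentiate the energy density $|\partial_t G_s|_{G_s}^2$ under the integral sign and then to strip the variation $\psi:=V/G$ out of every resulting term by two integrations by parts --- one in the time parameter $t\in[0,1]$ and one along the compact fibres $P(E_z)\cong\mathbb{C}P^{r-1}$ of $\pi\colon P(E)\to M$ --- so that the outcome reassembles into the $L^2$-pairing $\langle\cdot,\cdot\rangle_G$ of \eqref{1.4} with $H_1=G(\partial_tv_t-\|\partial^{\mathcal{V}}v_t\|^2)$ and $H_2=V$.

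First, set $G_s=G+sV$, $\phi_s=\log G_s$, $v_t^{(s)}=\partial_t\phi_s$ and $\Xi_s=\frac{\sqrt{-1}}{2\pi}\partial\bar\partial\phi_s$; since $\partial_tG_s/G_s=\partial_t\phi_s$, the definition \eqref{1.4} reads
\begin{align*}
|\partial_tG_s|_{G_s}^2=\int_M\Big[(r+1)\int_{P(E)/M}(v_t^{(s)})^2\,\Xi_s^{r-1}-r\Big(\int_{P(E)/M}v_t^{(s)}\,\Xi_s^{r-1}\Big)^2\Big]\frac{\omega^n}{n!}.
\end{align*}
I would differentiate at $s=0$ using $\partial_sv_t^{(s)}|_{s=0}=\partial_t\psi$ and $\partial_s\Xi_s|_{s=0}=\frac{\sqrt{-1}}{2\pi}\partial\bar\partial\psi$, and recall that $\psi(0)=\psi(1)=0$. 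This produces a finite sum of terms, each carrying either a factor $\partial_t\psi$ or a factor $\frac{\sqrt{-1}}{2\pi}\partial\bar\partial\psi$ wedged against a power of $\Xi$.

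For the $\partial_t\psi$-terms I would integrate by parts in $t$ over $[0,1]$: the endpoint contributions drop out because $\psi$ vanishes at $t=0,1$, and, using $\partial_t\Xi=\frac{\sqrt{-1}}{2\pi}\partial\bar\partial v_t$, each integration yields the sought factor $\partial_t v_t$ together with an auxiliary $\partial\bar\partial v_t$-term. For the $\partial\bar\partial\psi$-terms I would integrate by parts along the fibres: this is legitimate since $\Xi$ is $d$-closed (hence so is its restriction to each fibre) and the fibres are compact without boundary, and it moves $\partial\bar\partial$ from $\psi$ onto $v_t$ or $v_t^2$. Because only the vertical components survive the fibre integration and $\Xi$ restricts on each fibre to $\omega_{FS}$, the piece coming from $\partial\bar\partial(v_t^2)=2v_t\,\partial\bar\partial v_t+2\,\partial v_t\wedge\bar\partial v_t$ contributes $\frac{\sqrt{-1}}{2\pi}\partial^{\mathcal{V}}v_t\wedge\bar\partial^{\mathcal{V}}v_t\wedge\omega_{FS}^{r-2}$, which Lemma \ref{l22} rewrites as $\frac{1}{r-1}\|\partial^{\mathcal{V}}v_t\|^2\omega_{FS}^{r-1}$; this is where the $\|\partial^{\mathcal{V}}v_t\|^2$ in \eqref{1.7} comes from, the $\frac{1}{r-1}$ absorbing the $(r-1)$ that appeared on differentiating $\Xi^{r-1}$.

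The remaining work is bookkeeping. Among the auxiliary $\partial\bar\partial v_t$-terms, some cancel in pairs --- a copy produced by the $t$-integration by parts against a copy produced by the fibrewise integration by parts --- and the rest, directly or after one more fibrewise integration by parts, are converted by Lemma \ref{l22} into further $\|\partial^{\mathcal{V}}v_t\|^2$-terms. Tracking the coefficients $r$ and $r\pm1$ throughout, the surviving expression regroups --- once paired against $\psi$ and once against $\int_{P(E)/M}\psi\,\Xi^{r-1}$ --- into
\begin{align*}
-\int_0^1\int_M\Big[(r+1)\int_{P(E)/M}\big(\partial_tv_t-\|\partial^{\mathcal{V}}v_t\|^2\big)\psi\,\Xi^{r-1}-r\int_{P(E)/M}\big(\partial_tv_t-\|\partial^{\mathcal{V}}v_t\|^2\big)\Xi^{r-1}\int_{P(E)/M}\psi\,\Xi^{r-1}\Big]\frac{\omega^n}{n!}\,dt,
\end{align*}
which by \eqref{1.4} is exactly $-\int_0^1(G(\partial_tv_t-\|\partial^{\mathcal{V}}v_t\|^2),V)\,dt$, proving \eqref{1.7}. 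I expect the main obstacle to be precisely this combinatorial bookkeeping --- resolving each wedge product into its surviving vertical component and verifying that the $(r\pm1)$ factors cancel correctly against Lemma \ref{l22} --- rather than any conceptual difficulty; the analytic points (differentiation under the integral sign, fibrewise Stokes) are routine given the smoothness of $G$ on $E^o$ and the compactness of $M$ and of the fibres.
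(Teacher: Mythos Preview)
Your proposal is correct and follows essentially the same approach as the paper's proof: expand $\mathcal{E}(G+sV)$ via \eqref{1.4}, differentiate at $s=0$ to obtain four terms, integrate by parts in $t$ (using $V(0)=V(1)=0$) on the $\partial_t\psi$-terms and apply Stokes on the $\partial\bar\partial\psi$-terms, invoke Lemma~\ref{l22} to convert $\frac{\sqrt{-1}}{2\pi}\partial^{\mathcal V}v_t\wedge\bar\partial^{\mathcal V}v_t\wedge\omega_{FS}^{r-2}$ into $\|\partial^{\mathcal V}v_t\|^2\omega_{FS}^{r-1}$, and observe the pairwise cancellation of the auxiliary $v_t\,\partial_t\Xi^{r-1}$-terms. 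The paper carries out precisely this term-by-term computation explicitly; your outline captures every ingredient, and the ``bookkeeping obstacle'' you flag is indeed the only nontrivial step.
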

\begin{proof} Note that
\begin{align}\label{1.8}
\begin{split}
\mathcal{E}(G+sV)&=\frac{1}{2}\int_0^1\int_M \left[(r+1)\int_{P(E)/M}(\p_t\log(G+sV))^2\left(\frac{\sqrt{-1}}{2\pi}\p\b{\p}\log(G+sV)\right)^{r-1}\right.\\
   &\left.-r\left(\int_{P(E)/M}(\p_t\log(G+sV))\left(\frac{\sqrt{-1}}{2\pi}\p\b{\p}\log(G+sV)\right)^{r-1}\right)^2\right]\frac{\omega^n}{n!}dt.
\end{split}
\end{align}
By a direct computation, one has
\begin{align}\label{1.9}
   \begin{split}
     \frac{d\mathcal{E}(G+sV)}{ds}|_{s=0}&=\int_0^1\int_M \left\{(r+1)\int_{P(E)/M}\p_t\left(\frac{V}{G}\right)v_t\Xi^{r-1}\right.\\
     &\left.+\frac{(r+1)(r-1)}{2}\int_{P(E)/M}v_t^2\frac{\sqrt{-1}}{2\pi}\p\b{\p}\left(\frac{V}{G}\right)\Xi^{r-2}\right.\\
     &\left.-r\int_{P(E)/M}v_t\Xi^{r-1}\int_{P(E)/M}\p_t\left(\frac{V}{G}\right)\Xi^{r-1}\right.\\
     &\left.-r(r-1)\int_{P(E)/M}v_t\Xi^{r-1}\int_{P(E)/M}v_t\frac{\sqrt{-1}}{2\pi}\p\b{\p}\left(\frac{V}{G}\right)\Xi^{r-2}\right\}
     \frac{\omega^n}{n!}dt.
      \end{split}
   \end{align}
We will compute (\ref{1.9}) term by term by using the Stokes formula with respect to operators $\p_t$ or $\p$ and $\b\p$:
 \begin{align*}
   \begin{split}
     &\int_0^1\int_M(r+1)\left[\int_{P(E)/M}\p_t\left(\frac{V}{G}\right)v_t\Xi^{r-1}\right]\frac{\omega^n}{n!}dt\\
     &=-(r+1)\int_0^1\int_M \left[\int_{P(E)/M}\frac{V}{G}(\p_tv_t)\Xi^{r-1}
    +\frac{V}{G}v_t\p_t\Xi^{r-1}\right]\frac{\omega^n}{n!}dt
   \end{split}
 \end{align*}
 where the equality comes from $V(0)=V(1)=0$;
 \begin{align*}
   \begin{split}
     &\frac{(r+1)(r-1)}{2}\int_0^1\int_M\left[\int_{P(E)/M}v_t^2\frac{\sqrt{-1}}{2\pi}\p\b{\p}\left(\frac{V}{G}\right)
     \Xi^{r-2}\right]\frac{\omega^n}{n!}dt\\
     &=\frac{(r+1)(r-1)}{2}\int_0^1\int_M \left[\int_{P(E)/M}\frac{\sqrt{-1}}{2\pi}(\p\b{\p}v_t^2)\frac{V}{G}\Xi^{r-2}\right]\frac{\omega^n}{n!}dt\\
     &=(r+1)(r-1)\int_0^1\int_M \left[\int_{P(E)/M}\left(\frac{\sqrt{-1}}{2\pi}\p v_t\wedge\b{\p}v_t+\frac{\sqrt{-1}}{2\pi}v_t\p\b{\p}v_t\right)\frac{V}{G}\Xi^{r-2}\right]\frac{\omega^n}{n!}dt\\
     &=(r+1)\int_0^1\int_M \left[\int_{P(E)/M}\frac{V}{G}\left(\left\|\p^{\mathcal{V}}v_t\right\|^2
     \Xi^{r-1}+v_t\p_t\Xi^{r-1}\right)\right]\frac{\omega^n}{n!}dt,
   \end{split}
 \end{align*}
 where the last equality comes from (\ref{1.aaa});
 \begin{align*}
   \begin{split}
     &-r\int_0^1\int_M \left[\int_{P(E)/M}v_t\Xi^{r-1}\int_{P(E)/M}\p_t\left(\frac{V}{G}\right)\Xi^{r-1}\right]\frac{\omega^n}{n!}dt \\
     &=r\int_0^1\int_M \left[\int_{P(E)/M}v_t\Xi^{r-1}\int_{P(E)/M}\frac{V}{G}\p_t\Xi^{r-1}\right]\frac{\omega^n}{n!}dt\\
     &+r\int_0^1\int_M \left[\int_{P(E)/M}(\p_tv_t)\Xi^{r-1}\int_{P(E)/M}\frac{V}{G}\Xi^{r-1}\right]\frac{\omega^n}{n!}dt\\
     &+r\int_0^1\int_M \left[\int_{P(E)/M}\frac{(r-1){\sqrt{-1}}}{2\pi}v_t(\p\b\p v_t)\Xi^{r-2}\int_{P(E)/M}\frac{V}{G}\Xi^{r-1}\right]\frac{\omega^n}{n!}dt\\
    &=r\int_0^1\int_M \left[\int_{P(E)/M}v_t\Xi^{r-1}\int_{P(E)/M}\frac{V}{G}\p_t\Xi^{r-1}\right]\frac{\omega^n}{n!}dt\\
     &+r\int_0^1\int_M \left[\int_{P(E)/M}(\p_tv_t)\Xi^{r-1}\int_{P(E)/M}\frac{V}{G}\Xi^{r-1}\right]\frac{\omega^n}{n!}dt\\
     &-r\int_0^1\int_M \left[\int_{P(E)/M}\left\|\p^{\mathcal{V}}v_t\right\|^2\Xi^{r-1}\int_{P(E)/M}\frac{V}{G}\Xi^{r-1}\right]\frac{\omega^n}{n!}dt, \end{split}
 \end{align*}
 where the first and the second ``=" come from $V(0)=V(1)=0$ and (\ref{1.aaa}), respectively;
\begin{align*}
  \begin{split}
    &-r(r-1)\int_0^1\int_M\left[\left(\int_{P(E)/M}v_t\Xi^{r-1}\right)
    \left(\int_{P(E)/M}v_t\frac{\sqrt{-1}}{2\pi}\p\b{\p}\left(\frac{V}{G}\right)\Xi^{r-2}\right)\right]\frac{\omega^n}{n!}dt\\
           &=-r\int_0^1\int_M \left[\int_{P(E)/M}v_t\Xi^{r-1}\int_{P(E)/M}\frac{V}{G}\p_t\Xi^{r-1}\right]\frac{\omega^n}{n!}dt.
  \end{split}
\end{align*}
Thus
\begin{align}
  \begin{split}
     \frac{d\mathcal{E}(G+sV)}{ds}|_{s=0}=-\int_0^1\left(G(\p_tv_t-\left\|\p^{\mathcal{V}}v_t\right\|^2),V\right)dt.
  \end{split}
\end{align}
\end{proof}
From Theorem \ref{t1.1}, we obtain the following Euler-Lagrange equation associated to the functional $\mathcal{E}(G)$
\begin{align}\label{1.10}
  \p_tv_t-\left\|\p^{\mathcal{V}}v_t\right\|^2=0,\quad v_t=\p_t\log G_t.
\end{align}
The solutions of the equation (\ref{1.10}) are called the geodesics in $F^+(E)$, and the equation (\ref{1.10}) is called the geodesic
equation on $F^+(E)$.

A remarkable difference from the Hermitian-Einstein case, where the corresponding geodesic equation has an explicitly smooth solution for any two given end points, in our case the equation may not have any smooth solution for any two given Finsler metrics in $F^+(E)$. Fortunately, for our purpose of this paper, we only need the so-called ``approximate solutions" of (\ref{1.10}). More precisely, let $h_t$, $0\leq t\leq 1$, denote the Hermitian metric on the holomorphic vertical tangent bundle $\tilde{\mathcal{V}}$ of $P(E)$ induced by $G_t$ or $\omega_t:=2\pi\omega_{FS}(G_t)$. What we need is the existence and smoothness of the solutions of the following family Monge-Amp\`{e}re equation on $P(E)\to M$:
\begin{align}\label{ma}
(\p_tv_t-\left\|\p^{\mathcal{V}}v_t\right\|^2)\det h_t=\epsilon\det h_0,\quad v_t=\p_t\log G_t.
\end{align}
To this point, our problem is exactly the family case of the corresponding one in \cite{Chen}, see also  \cite{Semmes}. By using the geodesic approximation lemma, Lemma 7, in \cite{Chen} pointwisely for $z\in M$, and the smoothness of the solutions on the parameters $z\in M$, we have the following geodesic approximation lemma associated to the energy functional $\mathcal{E}(G)$ on $\Omega(G_0,G_1)$:
\begin{lemma}\label{ggg}(geodesic approximation lemma) The equation (\ref{ma}) has a smooth solution $G_{t,\epsilon}$ in $F^+(E)$ for any small $\epsilon>0$ and any two given endpoints $G_0, G_1\in F^+(E)$. Moreover, $G_{t,\epsilon}$ converges uniformly to a $C^{1,1}$ solution $G_t$ of the equation (\ref{1.10}) as $\epsilon\to 0$, and the following inequality holds
\begin{align}
  \left|\log \frac{G_{t,\epsilon}}{G_0}\right|\leq C,
\end{align}
where $C$ is a constant independent of $t$ and $\epsilon$.
\end{lemma}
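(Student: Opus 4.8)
The plan is to reduce the family equation (\ref{ma}) to its fiberwise version over each point $z\in M$, where it becomes the $\epsilon$-approximate geodesic equation of \cite{Chen}, and then to upgrade the resulting fiberwise solutions to a genuinely smooth family over $M$ with constants uniform in $z$. Concretely, fix $z$ and restrict everything to the fiber $P(E_z)\cong\C P^{r-1}$. Since the Kobayashi curvature $\Psi$ is a horizontal $(1,1)$-form, (\ref{1.00}) gives $\Xi(G_t)|_{P(E_z)}=\omega_{FS}(G_t)|_{P(E_z)}$, so $\omega_t:=2\pi\omega_{FS}(G_t)$ restricts to a K\"ahler metric on $P(E_z)$ whose cohomology class is independent of $t$ (it is $2\pi c_1(\mathcal{O}_{P(E_z)}(1))$, normalized by $\int_{P(E_z)}\Xi^{r-1}=1$). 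Writing $\phi_t:=\log(G_t/G_0)|_{P(E_z)}$ we get a path of K\"ahler potentials of $\omega_0|_{P(E_z)}$ with the fixed endpoints $\phi_0=0$, $\phi_1=\log(G_1/G_0)|_{P(E_z)}$, and $v_t=\p_t\log G_t$ restricts to $\p_t\phi_t$. A direct computation using (\ref{1.aaa}) identifies the restriction of (\ref{1.10}) with the Mabuchi geodesic equation $\p_t^2\phi_t-|\partial\,\p_t\phi_t|^2_{\omega_{\phi_t}}=0$ — equivalently, via the Semmes correspondence, with the homogeneous complex Monge--Amp\`ere equation on $P(E_z)\times A$ for an annulus $A$ — and identifies the restriction of (\ref{ma}) with Chen's $\epsilon$-approximate equation $(\p_t^2\phi_t-|\partial\,\p_t\phi_t|^2_{\omega_{\phi_t}})\,\omega_{\phi_t}^{r-1}=\epsilon\,\omega_0^{r-1}$; for $\epsilon>0$ the left factor is forced positive, so the equation is non-degenerate and uniformly elliptic.

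By Lemma~7 of \cite{Chen} (see also \cite{Semmes}), for every $\epsilon>0$ and every $z$ this fiberwise equation admits a smooth solution realizing the given endpoints, satisfying a uniform $C^{1,1}$ bound together with $|\log(G_{t,\epsilon}/G_0)|\le C(z)$ with $C(z)$ independent of $t$ and $\epsilon$, and converging in $C^{1,1}$ as $\epsilon\to0$ to the unique $C^{1,1}$ geodesic joining $G_0|_{P(E_z)}$ to $G_1|_{P(E_z)}$. To see that these fiberwise solutions assemble into a smooth family, regard (\ref{ma}) for $\epsilon>0$ as a complex Monge--Amp\`ere equation on $P(E)\times A$ (or $P(E)\times[0,1]$) that is elliptic along the fibers $P(E_z)\times[0,1]$ and depends smoothly on the parameter $z$; its fiberwise linearization is a Laplace-type elliptic operator with Dirichlet data at $t=0,1$, hence has trivial kernel and is invertible on the relevant H\"older spaces, with inverse bounded uniformly in $z$ because $M$ is compact and the data $G_0,G_1,\omega_{FS}(G_0),h_t$ vary smoothly with $z$. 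The implicit function theorem then produces a solution $G_{t,\epsilon}$ smooth jointly in $(x,t)$, i.e. $G_{t,\epsilon}\in F^+(E)$, and interior elliptic regularity in the fiber directions upgrades this where needed. Compactness of $M$ replaces the $C(z)$ by a single constant $C$, still independent of $t$ and $\epsilon$. Finally the uniform $C^{1,1}$ bound, Arzel\`a--Ascoli, and fiberwise uniqueness of the $C^{1,1}$ geodesic give a $C^{1,1}$ solution $G_t$ of (\ref{1.10}) with $G_{t,\epsilon}\to G_t$ uniformly, and $|\log(G_{t,\epsilon}/G_0)|\le C$ passes to the limit.

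The only work beyond \cite{Chen} is in the base direction, and this is where I expect the main difficulty: one must verify that Chen's a priori estimates depend only on quantities that vary continuously with $z$ (the $C^k$-norms of the endpoint potentials, the background K\"ahler metric on the fiber, and curvature and injectivity-radius bounds of $P(E_z)$), so that compactness of $M$ makes all constants uniform; and one must confirm that the family so obtained is honestly smooth on $P(E)\times[0,1]$ rather than merely a measurable family of fiberwise-smooth functions, which is precisely what the implicit function theorem and fiber-direction elliptic regularity on the total space supply. This is exactly the ``smoothness of the solutions on the parameters $z\in M$'' invoked above, and it is the sole genuinely new ingredient relative to Chen's pointwise statement.
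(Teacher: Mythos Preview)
Your proposal is correct and follows exactly the approach the paper takes: the paper does not give a detailed proof of this lemma but simply invokes Chen's Lemma~7 ``pointwisely for $z\in M$'' together with ``the smoothness of the solutions on the parameters $z\in M$,'' which is precisely your fiberwise reduction plus the implicit-function/elliptic-regularity argument for smoothness in the base direction. If anything, you have supplied considerably more detail than the paper itself, particularly in identifying the fiberwise equation with Chen's $\epsilon$-geodesic equation and in explaining why the constants are uniform in $z$ via compactness of $M$.
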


\section{A Donaldson type functional on $F^+(E)$}\label{Donldson}

In this section, we introduce a Donaldson type functional $\mathcal{L}$ on $F^+(E)$. Originally, the Donaldson functional is defined on $Herm^+(E)$, the space of all Hermitian metrics on $E$. Fixing an arbitrary metric $H\in Herm^+(E)$, and any smooth curve $G_t$ ($0\leq t\leq 1$) in $Herm^+(E)$ with $G_0=H$ and $G_1=G$, the original Donaldson functional $\mathcal{M}(G,H)$ is
defined by (cf.  \cite{Don1}, \cite{Ko3}, \cite{Siu})
  \begin{align}\label{555}
    \mathcal{M}(G,H):=\int_M\left(\int_0^1 tr(\sqrt{-1}V_t\cdot R_t)dt-\frac{\lambda}{n}\log\det\frac{G}{H}\omega\right)\frac{\omega^{n-1}}{(n-1)!},
  \end{align}
where $V_t=(\p_tG)G^{-1}$, $R_t$ is the (Chern-) curvature of the metrics $G_t$ and the constant $\lambda$ is given by (\ref{2.000}). The definition of the Donaldson functional $\mathcal{M}(G,H)$ is independent of the choice of smooth paths in $Herm^+(E)$ connecting $H$ and $G$. It is known that the critical points of the Donaldson functional coincide with the Hermitian-Einstein metrics on $E$ at which the functional $\mathcal{M}(G,H)$ attains the absolute minimum. Especially, when $\mathcal{M}(G,H)$ is bounded below, then $E$ admits an approximate Hermitian-Einstein metric, and so $E$ is semi-stable (cf. \cite{Ko3}).

When restricted to $Herm^+(E)$, the Donaldson type functional $\mathcal{L}$ turns out to be exactly $\mathcal{M}$. By computing the first and second variations of the functional $\mathcal{L}$, we will prove that the Finsler-Einstein metrics on $E$ coincide with the critical points of this functional, and at which the Donaldson type functional $\mathcal{L}$ attains the absolute minimum. Finally by showing that the functional has a lower-bound, we obtain the main result, Theorem 0.1, in this paper.

For any fixed $H\in F^+(E)$ and any $G\in F^+(E)$, let $G(t)$, $0\leq t\leq 1$, denote a smooth curve in $F^+(E)$ with $G(0)=H$, $G(1)=G$. For example, $G(t)=(1-t)H+tG$. We still denote $v_t=\p_{t}\log G(t)$, $\Xi_t={{\sqrt{-1}}/{2\pi}}\partial\bar\partial\log G(t)$. In the following, we always omit the $t$ from $\Xi_t$ as well as the Kobayashi curvature $\Psi_t$ of $G(t)$ for simplicity.
Set
 \begin{align}\label{2.1}
   Q_1(G,H)=r\int_0^1\left[\int_{P(E)/M}v_t\Xi^{r-1}\right]dt;
 \end{align}
 \begin{align}
   Q_2(G,H)=r(r+1)\int_0^1\int_{P(E)/M}(v_t\Psi)\Xi^{r-1}dt;
 \end{align}
 \begin{align}\label{2.2}
 Q_3(G,H)=-r^2\int_{0}^{1}\left[\int_{P(E)/M}v_t\Xi^{r-1}\int_{P(E)/M}\Psi\Xi^{r-1}\right]dt.
 \end{align}
 \begin{defn} The Donaldson type functional $\mathcal{L}$ associated to a holomorphic vector bundle $E$ is defined by
 \begin{align}\label{d2.1}
   \mathcal{L}(G,H)=\int_{M}\left(Q_2(G,H)+Q_3(G,H)-\frac{\lambda}{n}Q_1(G,H)\omega\right)\frac{\omega^{n-1}}{(n-1)!}.
 \end{align}
 \end{defn}
We need to show that the Donaldson type functional $\mathcal{L}$ is well-defined, that is, $\mathcal{L}(G,H)$ is independent of the choice of
paths in $F^+(E)$ connecting $G$ with the fixed metric $H$. Actually this comes directly from the following lemma, an analogue of Lemma 6.3.6 in \cite{Ko3}, which can be proved also in a similar way.

 \begin{lemma}\label{2.22}
   Let $G_t$, $\alpha\leq t\leq \beta$, be a piecewise differentiable closed curve in $F^+(E)$, (hence $G_{\alpha}=G_{\beta}$). Set $v_t=\p_t\log G_t$, then
   \begin{align}\label{2.3}
   \int_\alpha^\beta\int_{P(E)/M}v_t\Xi^{r-1}dt=0,
   \end{align}
   \begin{align}\label{2.4}
   \int_{\alpha}^{\beta}\int_{P(E)/M}(v_t\Psi)\Xi^{r-1}dt\in \p A^{0,1}+\b{\p}A^{1,0},
   \end{align}
   \begin{align}\label{2.5}
     \int_{\alpha}^{\beta}\left(\int_{P(E)/M}v_t\Xi^{r-1}\int_{P(E)/M}\Psi\Xi^{r-1}\right)dt\in \p A^{0,1}+\b{\p}A^{1,0}.
   \end{align}
 \end{lemma}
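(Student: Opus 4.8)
The plan is to establish the three identities in the spirit of Kobayashi's \cite[Lemma 6.3.6]{Ko3}: for each of the three integrands I would exhibit an explicit primitive along the curve $G_t$, so that integration over a closed loop makes everything telescope, the only surviving ambiguity being of the exact type $\partial A^{0,1}(M)+\bar\partial A^{1,0}(M)$. Two observations will be used throughout. First, $F^+(E)$ is convex --- the conditions {\bf F1)--F4)} are all stable under convex combinations --- hence contractible; so it is equivalent to show that the $1$-form on $F^+(E)$ attached to each integrand is closed modulo $\partial A^{0,1}(M)+\bar\partial A^{1,0}(M)$, and then to contract the loop $G_t$ to the constant loop at $G_\alpha$, e.g. along $G_{s,t}=(1-s)G_\alpha+sG_t$, which stays in $F^+(E)$ and keeps the endpoints fixed because $G_\alpha=G_\beta$. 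Second, every $d$-exact $(1,1)$-form on $M$ already lies in $\partial A^{0,1}(M)+\bar\partial A^{1,0}(M)$: if $d\rho$ has bidegree $(1,1)$ then $\partial\rho^{1,0}=0=\bar\partial\rho^{0,1}$, and hence $d\rho=\partial\rho^{0,1}+\bar\partial\rho^{1,0}$; thus for (\ref{2.4}) and (\ref{2.5}) it suffices to prove $d$-exactness on $M$.

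For (\ref{2.3}) I would set $\theta:=\Xi_\alpha$ and write $\Xi_t=\theta+\tfrac{\sqrt{-1}}{2\pi}\partial\bar\partial\psi_t$ with $\psi_t:=\log(G_t/G_\alpha)\in C^\infty(P(E))$, so that $\partial_t\psi_t=v_t$ and $\psi_\alpha=\psi_\beta=0$. Then the fibrewise Aubin--Mabuchi (Monge--Amp\`{e}re) energy
\[
\mathcal{E}_t:=\frac1r\sum_{j=0}^{r-1}\int_{P(E)/M}\psi_t\Big(\theta+\tfrac{\sqrt{-1}}{2\pi}\partial\bar\partial\psi_t\Big)^{j}\wedge\theta^{\,r-1-j}\ \in\ C^\infty(M)
\]
satisfies, by the usual telescoping computation (differentiate in $t$ and integrate by parts along the closed fibres $P(E_z)$, using $d\theta=0$), $\partial_t\mathcal{E}_t=\int_{P(E)/M}v_t\,\Xi^{r-1}$; therefore $\int_\alpha^\beta\int_{P(E)/M}v_t\,\Xi^{r-1}\,dt=\mathcal{E}_\beta-\mathcal{E}_\alpha=0$. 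The same device, applied to $\tfrac1{r+1}\sum_{j=0}^{r}\int_{P(E)/M}\psi_t\,\Xi_t^{\,j}\wedge\theta^{\,r-j}$, shows in passing that $\int_\alpha^\beta\int_{P(E)/M}v_t\,\Xi^{r}\,dt=0$ along closed loops, which is convenient since by (\ref{1.00}) one has $v_t\Psi=2\pi\,v_t\,\omega_{FS}-2\pi\,v_t\,\Xi$.

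For (\ref{2.4}) and (\ref{2.5}) I would follow the transgression argument of \cite[Lemma 6.3.6]{Ko3} with the Kobayashi curvature $\Psi$ in place of the Chern curvature. The key input is the first variation of $\Psi_t$ along the curve, computed in Section \ref{Donldson}: in the fixed fibre coordinates one has the elementary identity $\partial_t\Psi_t+v_t\Psi_t=\sqrt{-1}\,(\partial_tK_{i\bar j\alpha\bar\beta})\,\tfrac{v^i\bar v^j}{G}\,dz^\alpha\wedge d\bar z^\beta$, and after taking the fibre integral $\int_{P(E)/M}(\,\cdot\,)\,\Xi^{r-1}$ the right-hand side contributes a horizontal $\partial^{\mathcal H}\bar\partial^{\mathcal H}$ of a function on $M$ (the Finsler analogue of the variation of the trace of the curvature) together with further terms that should reorganize, via integration by parts, into $d$-exact ones. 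Inserting this into $\partial_t\!\int_{P(E)/M}v_t\,\Psi\,\Xi^{r-1}$, then integrating by parts in $t$ over the closed loop (the total $t$-derivatives drop out because $\psi_\alpha=\psi_\beta=0$) and fibrewise on $P(E)$ (where $\Xi$ is closed while $\Psi$, $\omega_{FS}$ are not), should leave a $d$-exact $(1,1)$-form on $M$, giving (\ref{2.4}). For (\ref{2.5}) I would use in addition that, by \cite{FLW} (cf. (\ref{1.00})), $\gamma_t:=\int_{P(E)/M}\Psi_t\,\Xi_t^{\,r-1}$ is, up to a fixed constant, a Chern form of $E$, hence a closed $(1,1)$-form whose de Rham class ($\propto c_1(E)$) is independent of $t$; combining the resulting transgression $\gamma_t-\gamma_\alpha\in\partial A^{0,1}+\bar\partial A^{1,0}$ with $\int_{P(E)/M}v_t\,\Xi^{r-1}=\partial_t\mathcal{E}_t$ from the previous step, the loop integral in (\ref{2.5}) should collapse, after integrating by parts in $t$, to a $d$-exact $(1,1)$-form on $M$ as well.

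The hard part is not (\ref{2.3}), which is elementary, but the bookkeeping behind (\ref{2.4}) (and its input to (\ref{2.5})): carrying out the two integrations by parts --- in time and along the fibres of $P(E)$ --- while keeping track of exactly which boundary and remainder terms genuinely combine into a $d$-exact, equivalently $\partial A^{0,1}+\bar\partial A^{1,0}$, $(1,1)$-form on $M$, rather than leaving an uncontrolled term of the shape ``(function)$\,\cdot\,\partial\bar\partial$(function)''. This is precisely where the explicit form of the first variation of the Kobayashi curvature and the K\"{a}hler condition on $M$ (through the $\partial\bar\partial$-lemma, used to choose the transgression potentials smoothly in the parameters) enter in an essential way; it is the step that requires the genuine computation of \cite[Lemma 6.3.6]{Ko3}.
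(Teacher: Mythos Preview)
Your high-level strategy --- break the loop into smooth arcs, fill in a two-parameter family via the convexity of $F^+(E)$, and verify that the $1$-form on $F^+(E)$ attached to each integrand is closed modulo $\partial A^{0,1}+\bar\partial A^{1,0}$ --- is exactly the paper's strategy, and your treatment of (\ref{2.3}) by an explicit Aubin--Mabuchi primitive is correct (the paper simply calls this case ``obvious'').

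Where you diverge from the paper, and where the proposal has a genuine gap, is in the execution for (\ref{2.4}) and (\ref{2.5}). You try to feed in the pointwise variation $\partial_t\Psi_t+v_t\Psi_t=\sqrt{-1}(\partial_tK_{i\bar j\alpha\bar\beta})\tfrac{v^i\bar v^j}{G}dz^\alpha\wedge d\bar z^\beta$ and then hope the fibre integral of the right-hand side is $\partial^{\mathcal H}\bar\partial^{\mathcal H}$ of a function on $M$. In the Finsler setting this is not what one gets: $K_{i\bar j\alpha\bar\beta}$ depends on the fibre variables, so its $t$-variation does not reduce to a clean $\partial\bar\partial$ on $M$ after fibre integration, and you are left precisely with the uncontrolled ``(function)$\cdot\partial\bar\partial$(function)'' terms you yourself warn about. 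The same problem bites in your argument for (\ref{2.5}): writing $\gamma_t-\gamma_\alpha=d\mu_t$ and integrating by parts in $t$ produces $\int_\alpha^\beta d\mathcal E_t\wedge\partial_t\mu_t\,dt$, which is a $(1,1)$-form of exactly the bad shape and is \emph{not} in $\partial A^{0,1}+\bar\partial A^{1,0}$ in general. Finally, neither the K\"ahler hypothesis on $M$ nor the $\partial\bar\partial$-lemma is used (or needed) here; the argument is purely at the level of forms.

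The paper sidesteps all of this with one stroke: on the rectangle $\triangle=[a,b]\times[0,1]$ it sets
\[
\phi=\int_{P(E)/M}(\tilde d\log G)\wedge\Psi\wedge\Xi^{r-1},
\]
and computes $\tilde d\phi$ \emph{directly}. Because $\Xi=\tfrac{\sqrt{-1}}{2\pi}\partial\bar\partial\log G$ is $\partial$- and $\bar\partial$-closed, the elementary identity $2\,u\wedge\partial\bar\partial u=-\partial(u\wedge\bar\partial u)+\bar\partial(u\wedge\partial u)$ (with $u=\tilde d\log G$) pushes through the fibre integral to give
\[
\tilde d\phi=\tfrac12\Big[\partial\!\int_{P(E)/M}(\tilde d\log G\wedge\bar\partial\tilde d\log G)\wedge\Xi^{r-1}-\bar\partial\!\int_{P(E)/M}(\partial\tilde d\log G\wedge\tilde d\log G)\wedge\Xi^{r-1}\Big]\in\partial A^{0,1}+\bar\partial A^{1,0},
\]
with no appeal to the explicit form of $\partial_tK_{i\bar j\alpha\bar\beta}$ whatsoever. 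Stokes on $\triangle$ then yields (\ref{2.4}); (\ref{2.5}) is handled by the same device with $\phi$ replaced by the product $\big(\int_{P(E)/M}(\tilde d\log G)\Xi^{r-1}\big)\big(\int_{P(E)/M}\Psi\,\Xi^{r-1}\big)$. The moral is that working with the parameter $1$-form $\phi$ and the closed form $\Xi$ makes the ``hard part'' you identified disappear; trying to go through $\partial_tK_{i\bar j\alpha\bar\beta}$ does not.
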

 \begin{proof}
   The formula (\ref{2.3}) is obvious. We only prove (\ref{2.4}) while the proof of (\ref{2.5}) is similar. Let $\alpha=\alpha_0<a_1<\cdots<a_k=\beta$ be the value of $t$ where $G_t$ is not differentiable. Fix a reference point $H\in F^+(E)$. It suffices then to prove (\ref{2.4}) for the closed curve consisting of a smooth curve from $H$ to $G_{a_j}$, the curve $G_t$, $a_j\leq t\leq a_{j+1}$, and a smooth curve from $G_{a_{j+1}}$ back to $H$. Set $a=a_{j}$, $b=a_{j+1}$,
   $$\t=\{(t,s):a\leq t\leq b,\ 0\leq s\leq 1\}.$$
   Let $G:\t\to F^+(E)$ be a smooth map such that
   $$G(t,0)=H,\quad G(t,1)=G_t,\quad\mbox{\rm for}\ a\leq t\leq b.$$
   Denote by
   $$\phi=r\int_{P(E)/M}(\tilde{d}\log G\wedge\Psi)\w\Xi^{r-1},$$
   where $\tilde{d}=(\p/\p s)ds+(\p/\p t)dt$.  Note that
   \begin{align}\label{2.6}
   \begin{split}
     \int_{\p\t}\phi&=\int_{t=a}^{t=b}\phi|_{s=0}+\int_{s=0}^{s=1}\phi|_{t=b}-\int_{t=a}^{t=b}\phi|_{s=1}-\int_{s=0}^{s=1}\phi|_{t=a}\\
     &=-\int_{a}^{b}(\int_{P(E)/M} 2\pi v_t\Xi^r)dt+\frac{1}{r+1}(Q_2(G_b,H)-Q_2(G_a,H)),
     \end{split}
   \end{align}
  where $\p\t$ is oriented counterclockwise. On the other hand,
   \begin{align*}
     \frac{1}{r}\tilde{d}\phi&=-\frac{1}{2}\left[\p \int_{P(E)/M}(\tilde{d}\log G\wedge\b{\p}\tilde{d}\log G)\wedge\Xi^{r-1}\right.\\
     &\left.-\b{\p}\int_{P(E)/M}(\p\tilde{d}\log G\wedge{\tilde d}\log G)\wedge\Xi^{r-1}\right]\in \p A^{0,1}+\b{\p}A^{1,0},
   \end{align*}
  so  statement (\ref{2.4}) follows from the following Stokes formula:
   \begin{align}\label{1.1222}
   \int_{\p\t}\phi=\int_{\t}\tilde{d}\phi.
   \end{align}
   \end{proof}
Actually, by the following proposition, one can see that the functional $\mathcal{L}$ is indeed an extension of the Donaldson functional $\mathcal{M}$.
\begin{prop}\label{pp} For any fixed $H\in Herm^+(E)$ and any $G\in Herm^+(E)$,
\begin{align}\label{999}
\mathcal{L}(G,H)=\mathcal{M}(G,H).
\end{align}
\end{prop}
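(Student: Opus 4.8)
The plan is to show that when $G \in Herm^+(E)$, the three functionals $Q_1, Q_2, Q_3$ collapse to the corresponding pieces of the classical Donaldson functional $\mathcal{M}(G,H)$ by carrying out the fibre integrals over $P(E)$ explicitly. The starting observation is that a Hermitian metric $h$ on $E$ is a special Finsler metric with $G(z,v) = h_{i\bar j}(z) v^i \bar v^j$, so along a path $G(t) = h(t)$ inside $Herm^+(E)$ the quantity $v_t = \partial_t \log G$ is a function on $P(E)$ of the form $v_t = \big( (\partial_t h) h^{-1}\big)^i_{\ j} \, \frac{h_{i\bar k} v^k \bar v^j}{G}$, i.e. the "radial" average of the endomorphism $V_t = (\partial_t h) h^{-1}$. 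Likewise the Kobayashi curvature reduces, by the Remark following Definition \ref{1.886}, to $\Psi = \sqrt{-1}\, R^i_{\ j\alpha\bar\beta}\, \frac{h_{i\bar k} v^k \bar v^j}{G}\, dz^\alpha \wedge d\bar z^\beta$ where $R_t$ is the Chern curvature of $h(t)$; and $\Xi = \omega_{FS}(G)$ is simply the Fubini–Study form on each fibre coming from $h$, whose fibre integrals of products with such "matrix-coefficient" functions are governed by standard $P^{r-1}$-integration formulas.

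The key computational step is therefore the fibre-integration lemma: for an endomorphism-valued expression, one has the normalized integrals
\begin{align*}
\int_{P(E)/M} \frac{h_{i\bar j} v^i \bar v^j}{G}\,\Xi^{r-1} = \tfrac{1}{r}\,\delta_{i}^{\ j}\cdot(\text{trace form}),
\end{align*}
more precisely $\int_{P(E)/M} A^i_{\ j}\,\frac{h_{i\bar k}v^k\bar v^j}{G}\,\Xi^{r-1} = \tfrac{1}{r}\operatorname{tr}(A)$ for any endomorphism $A$, since the $P^{r-1}$-average of the rank-one projection onto the line $[v]$ is $\tfrac1r I$. Applying this to $v_t$ gives $\int_{P(E)/M} v_t\,\Xi^{r-1} = \tfrac1r \operatorname{tr}(V_t) = \tfrac1r \partial_t \log\det h(t)$, so that $Q_1(G,H) = \int_0^1 \partial_t \log\det h\, dt = \log\det(G/H) = \log\det(GH^{-1})$, which is exactly the term carrying the factor $-\lambda/n$ in $\mathcal{M}$. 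Applying it to the product $v_t\Psi$ (an endomorphism contracted with a $(1,1)$-form coefficient) one must evaluate $\int_{P(E)/M}\big(v_t\Psi\big)\Xi^{r-1}$; here $v_t\Psi$ involves a product of two rank-one-projection factors, whose $P^{r-1}$-average is $\tfrac{1}{r(r+1)}(\operatorname{tr}(V_t)\operatorname{tr}(\sqrt{-1}R_t) + \operatorname{tr}(V_t\cdot \sqrt{-1}R_t))$ as forms in $dz^\alpha\wedge d\bar z^\beta$. The normalization factor $r(r+1)$ in the definition of $Q_2$ is precisely chosen so that $Q_2(G,H) = \int_0^1\big(\operatorname{tr}(\sqrt{-1}V_t R_t) + \operatorname{tr}(V_t)\operatorname{tr}(\sqrt{-1}R_t)\big)dt$, while $Q_3(G,H) = -r^2 \int_0^1 \big(\tfrac1r\operatorname{tr}V_t\big)\big(\tfrac1r\operatorname{tr}\sqrt{-1}R_t\big)dt = -\int_0^1 \operatorname{tr}(V_t)\operatorname{tr}(\sqrt{-1}R_t)dt$, which cancels exactly the cross term in $Q_2$. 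Hence $Q_2 + Q_3 = \int_0^1 \operatorname{tr}(\sqrt{-1}V_t R_t)\,dt$, and assembling everything into \eqref{d2.1} reproduces \eqref{555} verbatim.

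The main obstacle I expect is getting the $P^{r-1}$-averaging formula for the \emph{quadratic} expression $v_t\Psi$ right — in particular the symmetrization identity $\int_{P^{r-1}}\xi^i\bar\xi_j\,\xi^k\bar\xi_l\,(\text{FS vol}) = \tfrac{1}{r(r+1)}(\delta^i_j\delta^k_l + \delta^i_l\delta^k_j)$ and making sure the Chern-curvature term $\Psi$ of a Hermitian metric genuinely has $v$-independent coefficients $R^i_{\ j\alpha\bar\beta}$ (this is the content of the Remark), so that it factors out of the fibre integral cleanly. A secondary subtlety is bookkeeping: one has to keep track of $\omega^{n-1}/(n-1)!$ versus $\omega^n/n!$ and the fact that in \eqref{555} the term $\tfrac{\lambda}{n}\log\det\tfrac GH\,\omega$ wedged with $\omega^{n-1}/(n-1)!$ matches $\tfrac\lambda n Q_1\,\omega$ wedged with the same factor — but this is purely formal once $Q_1 = \log\det(GH^{-1})$ is established. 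Finally, since $\mathcal{L}$ is path-independent by Lemma \ref{2.22} and $\mathcal{M}$ is path-independent by the classical theory, it suffices to verify the identity along any one convenient path, e.g. the linear path $G(t) = (1-t)H + tG$ in $Herm^+(E)$, so no issue of path-dependence arises in the comparison.
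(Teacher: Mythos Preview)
Your proposal is correct and follows essentially the same route as the paper's own proof: both take the linear path $G(t)=(1-t)H+tG$ in $Herm^+(E)$, reduce the fibre integrals over $P(E)/M$ via the moment formulas $\int_{P(E)/M}\frac{v^i\bar v^j}{G}\,\Xi^{r-1}=\frac{1}{r}G^{i\bar j}$ and $\int_{P(E)/M}\frac{v^i\bar v^j v^k\bar v^l}{G^2}\,\Xi^{r-1}=\frac{1}{r(r+1)}(G^{i\bar j}G^{k\bar l}+G^{i\bar l}G^{k\bar j})$, and then observe that the cross term $\operatorname{tr}(V_t)\operatorname{tr}(\sqrt{-1}R_t)=(\partial_t\log\det G)(\sqrt{-1}\,\bar\partial\partial\log\det G)$ produced in $Q_2$ is cancelled exactly by $Q_3$, leaving $Q_2+Q_3=\int_0^1\operatorname{tr}(\sqrt{-1}V_tR_t)\,dt$ and $Q_1=\log\det(GH^{-1})$. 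The only cosmetic difference is that you phrase the quadratic formula as an averaging-of-rank-one-projections identity on $P^{r-1}$, whereas the paper writes it directly in index form; the content is identical.
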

\begin{proof} Let $G(t)=(1-t)H+t G(\in Herm^+(E))$. Then
 \begin{align*}
   Q_1(G,H)&=r\int_0^1\int_{P(E)/M}v_t\Xi^{r-1}dt=r\int_0^1\int_{P(E)/M}(\p_t\log G)\Xi^{r-1}dt\\
   &=r\int_0^1\p_tG_{i\b{j}}\int_{P(E)/M}\frac{v^i \b{v}^j}{G}\Xi^{r-1}dt=\int_0^1\p_t\log\det(G_{i\b j}(t))dt=\log\det\frac{G}{H};
 \end{align*}
 \begin{align*}
   Q_2(G,H)&=(r+1)r\int_0^1\int_{P(E)/M}v_t\Psi\Xi^{r-1}dt\\
   &=(r+1)r\int_0^1\p_t G_{i\b{j}}(t)\int_{P(E)/M}\frac{v^i \b{v}^j}{G(t)}\Psi\Xi^{r-1}dt\\
   &=\sqrt{-1}(r+1)r\int_0^1\p_t G_{i\b{j}}(t)K_{k\b{l}\alpha\b{\beta}}dz^{\alpha}\wedge d\b{z}^{\beta}\frac{G^{i\b{j}}G^{k\b{l}}+G^{i\b{l}}G^{k\b{j}}}{r(r+1)}dt\\
   &=\sqrt{-1}\int_0^1(\p_t\log\det G)(\b{\p}\p\log\det G)+tr(V_t\cdot R_t))dt,
 \end{align*}
 \begin{align*}
   Q_3(G,H)&=-r^2\int_0^1\left[\int_{P(E)/M}v_t\Xi^{r-1}\int_{P(E)/M}\Psi\Xi^{r-1}\right]dt\\
   &=-r\int_0^1\left[(\p_t\log\det G){\sqrt{-1}}K_{i{\b j}\alpha\b\beta}dz^\alpha\w d\b z^\beta\int_{P(E)/M}\frac{v^i \b{v}^j}{G}\Xi^{r-1}\right]dt\\
   &=-\sqrt{-1}\int_0^1(\p_t\log\det G)(\b{\p}\p\log\det G)dt.
 \end{align*}
 Thus
 \begin{align*}
   \mathcal{L}(G,H)&=\int_{M}(Q_{2}(G,H)+Q_3(G,H)-\frac{\lambda}{n}Q_{1}(G,H)\w\omega)\wedge\frac{\omega^{n-1}}{(n-1)!}\\
   &=\int_M(\sqrt{-1}\int_0^1 tr(V_t\cdot R_t)dt-\frac{\lambda}{n}(\log\det\frac{G}{H})\omega)\wedge\frac{\omega^{n-1}}{(n-1)!}=\mathcal{M}(G,H).
 \end{align*}
\end{proof}
As the Donaldson functional on $Herm^+(E)$, the Donaldson type functional $\mathcal{L}$ on $F^+(E)$ also has the following property analogous
to Lemma 6.3.23 in \cite{Ko3}.
\begin{lemma} Let $G,G',G''\in F^+(E)$. Then
 \begin{align}\label{2.8}
 \mathcal{L}(G,G')+\mathcal{L}(G',G'')+\mathcal{L}(G'',G)=0;
 \end{align}
 \begin{align}\label{2.88}
 \mathcal{L}(G,aG)=0\ \mbox{\rm for any positive constant}\ a;
 \end{align}
 \begin{align}\label{2.89}
 \begin{split}
     &\sqrt{-1}\p\b{\p}(Q_{2}(G,G')+Q_3(G,G')-\frac{\lambda}{n}Q_1(G,G')\omega)\\
     &=-\frac{1}{(2\pi)^{r-1}}(s_2(E,G)-s_2(E,G'))+\frac{1}{(2\pi)^{2r-2}2}(s_1(E,G)^2-s_1(E,G')^2)\\
     &-\frac{\lambda}{(2\pi)^{r-1}n}(s_1(E,G)-s_1(E,G'))\wedge\omega,
 \end{split}
 \end{align}
 where $s_j(E,G)=\int_{P(E)/M}(2\pi\Xi)^{r-1+j}$, $1\leq j\leq n$, is the $j$-th Segre form (cf. \cite{FLW}).
\end{lemma}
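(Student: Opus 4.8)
The plan is to prove the three assertions separately; (\ref{2.8}) and (\ref{2.88}) are short consequences of results already in hand, while the $\sqrt{-1}\p\b\p$-identity (\ref{2.89}) is the substantive point. \emph{For the cocycle identity (\ref{2.8}):} the key observation is that each of $Q_1,Q_2,Q_3$ has the form $\int_0^1(\cdots)\,dt$ and is therefore additive under concatenation of curves. So I would concatenate a curve from $G'$ to $G$, one from $G''$ to $G'$, and one from $G$ to $G''$ into a piecewise smooth \emph{closed} loop based at $G$; then $\mathcal L(G,G')+\mathcal L(G',G'')+\mathcal L(G'',G)$ equals $\int_M\big(Q_2+Q_3-\frac{\lambda}{n}Q_1\,\omega\big)\wedge\frac{\omega^{n-1}}{(n-1)!}$ evaluated along this loop. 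By (\ref{2.3}) the $Q_1$-part of a closed loop vanishes identically, and by (\ref{2.4})--(\ref{2.5}) the $Q_2$- and $Q_3$-parts lie in $\p A^{0,1}+\b\p A^{1,0}$; since $M$ is closed and $d\omega=0$, for $\alpha\in A^{0,1}$ one has $\int_M(\p\alpha)\wedge\frac{\omega^{n-1}}{(n-1)!}=\int_M d\big(\alpha\wedge\frac{\omega^{n-1}}{(n-1)!}\big)=0$ (the remaining term $\b\p\alpha\wedge\omega^{n-1}$ is of type $(n-1,n+1)$, hence $0$), and likewise for $\b\p A^{1,0}$, so these parts integrate to $0$ as well. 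Setting $G'=G''=G$ then gives $\mathcal L(G,G)=0$.

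\emph{For the normalization (\ref{2.88}):} along the explicit curve $G(t)=\big((1-t)a+t\big)G$ from $aG$ to $G$, the function $v_t=\p_t\log G(t)$ depends on $t$ only and satisfies $\int_0^1 v_t\,dt=-\log a$; moreover multiplying $G$ by a positive constant changes neither $\Xi$ nor $\omega_{FS}$ nor the Kobayashi curvature $\Psi$ (the latter is scale invariant since $K_{i\b j\alpha\b\beta}$ and $v^i\b v^j/G$ scale inversely), so $\int_{P(E)/M}\Xi^{r-1}=1$ and $\eta:=\int_{P(E)/M}\Psi\,\Xi^{r-1}$ are independent of $t$. Substituting gives $Q_1=-r\log a$, $Q_2=-r(r+1)(\log a)\eta$, $Q_3=r^2(\log a)\eta$, whence
\begin{align*}
\mathcal L(G,aG)=-r(\log a)\int_M\Big(\eta-\frac{\lambda}{n}\,\omega\Big)\wedge\frac{\omega^{n-1}}{(n-1)!},
\end{align*}
so it remains to check $\int_M\eta\wedge\frac{\omega^{n-1}}{(n-1)!}=\frac{\lambda}{n}\int_M\frac{\omega^n}{(n-1)!}$. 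Writing $\Psi=2\pi(\omega_{FS}-\Xi)$ and performing a fibre-dimension count (after wedging with $\pi^*\omega^{n-1}$ only the term linear in $\Psi$ survives in both $\Xi^r=(\omega_{FS}-\frac1{2\pi}\Psi)^r$ and $\omega_{FS}\Xi^{r-1}$) expresses $\int_M\eta\wedge\frac{\omega^{n-1}}{(n-1)!}$ as a fixed multiple of the topological quantity $\int_M s_1(E,G)\wedge\omega^{n-1}$; comparing with the formula (\ref{2.000}) for $\lambda$ then gives the desired identity, hence $\mathcal L(G,aG)=0$.

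\emph{For the $\sqrt{-1}\p\b\p$-formula (\ref{2.89}):} I would compute $\sqrt{-1}\p\b\p Q_i(G,G')$ for $i=1,2,3$ along any chosen curve (the outcome is path-independent because, by Lemma \ref{2.22}, changing the curve alters the $Q_i$ only by elements of $\p A^{0,1}+\b\p A^{1,0}$, which $\sqrt{-1}\p\b\p$ annihilates). Using that fibre integration commutes with $\sqrt{-1}\p\b\p$, that $\Xi_t$ is $d$-closed, and that $\sqrt{-1}\p\b\p v_t=2\pi\,\p_t\Xi_t$, one gets
\begin{align*}
\sqrt{-1}\p\b\p\int_{P(E)/M} v_t\,\Xi_t^{r-1}=2\pi\int_{P(E)/M}(\p_t\Xi_t)\w\Xi_t^{r-1}=\frac{2\pi}{r}\,\p_t\int_{P(E)/M}\Xi_t^{r},
\end{align*}
so $\sqrt{-1}\p\b\p Q_1=2\pi\big(s_1(E,G)-s_1(E,G')\big)$ up to the normalization of the Segre forms. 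For $Q_2$ and $Q_3$ I would substitute $\Psi_t=2\pi(\omega_{FS,t}-\Xi_t)$ from (\ref{1.00}) and push $\sqrt{-1}\p\b\p$ through the fibre integral: the terms built purely from $\Xi_t$ become total $t$-derivatives of $\int_{P(E)/M}\Xi_t^{r+1}$ and of $\big(\int_{P(E)/M}\Xi_t^{r}\big)\w\big(\int_{P(E)/M}\Xi_t^{r}\big)$, which integrate in $t$ to $s_2(E,G)-s_2(E,G')$ and $s_1(E,G)^2-s_1(E,G')^2$, while the remaining $\omega_{FS,t}$-terms are rewritten using the fibre-integration identities of \cite{FLW} in the spirit of Lemma \ref{l22}. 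Using in addition $\sqrt{-1}\p\b\p(Q_1\omega)=(\sqrt{-1}\p\b\p Q_1)\w\omega$ (valid since $d\omega=0$) and collecting all boundary terms with the correct powers of $2\pi$ and combinatorial constants yields (\ref{2.89}).

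I expect the computation of $\sqrt{-1}\p\b\p Q_2$ (and $\sqrt{-1}\p\b\p Q_3$) to be the main obstacle: unlike $Q_1$, these functionals carry the Kobayashi curvature $\Psi_t$, equivalently the \emph{non-closed} vertical form $\omega_{FS,t}$, so $\sqrt{-1}\p\b\p$ does not simply slide past it and one must exploit the structure of the fibration $\pi:P(E)\to M$ together with the push-forward identities of \cite{FLW}; moreover the normalization bookkeeping (the factors $(2\pi)^{r-1}$, $(2\pi)^{2r-2}$ and the constants $r$, $r\pm 1$) is delicate. By contrast, once Lemma \ref{2.22} and the derivation of (\ref{2.000}) are available, (\ref{2.8}) and (\ref{2.88}) are essentially routine.
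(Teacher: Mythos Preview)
Your treatment of (\ref{2.8}) and (\ref{2.88}) is essentially identical to the paper's. The only cosmetic difference is that for (\ref{2.88}) the paper takes the exponential path $G_t=e^{(1-t)b}G$ (with $a=e^b$), so that $v_t\equiv -b$ is constant; it then reads off $Q_2$ and $Q_3$ as multiples of $\int_{P(E)/M}\Xi^r$ and identifies their cohomology classes with $\mp 2\pi b(r+1)c_1(E)$ and $\pm 2\pi b r\,c_1(E)$, after which (\ref{2.000}) finishes the argument exactly as in your outline.

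For (\ref{2.89}) the routes diverge. The paper does \emph{not} substitute $\Psi=2\pi(\omega_{FS}-\Xi)$; instead it simply records, without derivation, three closed formulas
\[
\sqrt{-1}\p\b\p Q_i(G,G')=\text{(difference of }\textstyle\int_{P(E)/M}(\sqrt{-1}\p\b\p\log G)^{k}\text{ at the two endpoints)},\quad i=1,2,3,
\]
and adds them up. The underlying mechanism is precisely the Bott--Chern telescoping you already used for $Q_1$, namely $\p_t\Xi^{k}=\tfrac{k}{2\pi}\sqrt{-1}\p\b\p(v_t\Xi^{k-1})$ together with closedness of $\Xi$; the $Q_3$ identity comes from the product version $\p_t\big(\int\Xi^r\wedge\int\Xi^r\big)$. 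Because the paper works exclusively with the closed form $\Xi$ (never unpacking $\Psi$ into $\omega_{FS}-\Xi$), the ``$\omega_{FS}$--terms'' you single out as the main obstacle never appear. Your decomposition would ultimately reach the same endpoint differences, but it forces you to show that the contributions involving the non-closed form $\omega_{FS}$ either vanish or recombine correctly---extra work that the paper's formulation sidesteps. The trade-off is that the paper's proof of (\ref{2.89}) is extremely terse (the three displayed identities are asserted, not proved), while your approach is more explicit but longer; both are valid, and your worry about the $\omega_{FS}$ bookkeeping is a fair reflection of what your chosen route actually requires.
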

\begin{proof} We first give the proof of (\ref{2.8}). By Lemma \ref{2.2}, we have
   $$Q_{i}(G,G')+Q_{i}(G',G'')+Q_i(G'',G)\equiv 0\quad \mod \quad \p A^{0,1}+\b{\p}A^{1,0}, \quad i=2,3,$$
   $$Q_{1}(G,G')+Q_{1}(G',G'')+Q_1(G'',G)=0.$$
 For the proof of (\ref{2.88}), we let $G_t=e^{(1-t)b}G$, $a=e^b$. So we have
   $$Q_1(G,aG)=-br\int_0^1 \int_{P(E)/M}\Xi^{r-1}dt=-br,$$
   $$Q_2(G,aG)=2\pi b(r+1)\int_0^1 \int_{P(E)/M}\Xi^{r}dt,$$
   $$Q_3(G,aG)=-2\pi br\int_0^1 \int_{P(E)/M}\Xi^{r}dt.$$
   Therefore
   $$[Q_2(G,aG)]=-2\pi b(r+1)c_1(E),\quad [Q_3(G,aG)]=2\pi br c_1(E),$$
   and  $\mathcal{L}(G,aG)=0$.

   For the proof of (\ref{2.89}), we note that, since
   $$\sqrt{-1}\p\b{\p}Q_2(G,G')=-\frac{1}{(2\pi)^{r-1}}\int_{P(E)/M}((\sqrt{-1}\p\b{\p}\log G)^{r+1}-(\sqrt{-1}\p\b{\p}\log G')^{r+1}),$$
   $$\sqrt{-1}\p\b{\p}Q_1(G,G')=\frac{1}{(2\pi)^{r-1}}\int_{P(E)/M}((\sqrt{-1}\p\b{\p}\log G)^{r}-(\sqrt{-1}\p\b{\p}\log G')^{r}),$$
   and
   \begin{align*}
     \sqrt{-1}\p\b{\p}Q_3(G,G')=\frac{1}{2(2\pi)^{2r-2}}[(\int_{P(E)/M}(\sqrt{-1}\p\b{\p}\log G)^r)^2-(\int_{P(E)/M}(\sqrt{-1}\p\b{\p}\log G')^r)^2],
   \end{align*}
   we have
   \begin{align*}
     &\sqrt{-1}\p\b{\p}(Q_{2}(G,G')+Q_3(G,G')-\frac{\lambda}{n}Q_1(G,G')\omega)=-\frac{1}{(2\pi)^{r-1}}(s_2(E,G)-s_2(E,G'))\\
     &+\frac{1}{(2\pi)^{2r-2}2}(s_1(E,G)^2-s_1(E,G')^2)-
     \frac{\lambda}{(2\pi)^{r-1}n}(s_1(E,G)-s_1(E,G'))\wedge\omega.
   \end{align*}
   The lemma is proved.
 \end{proof}

Now we compute the first variation of the Donaldson type functional $\mathcal{L}(G_t,H)$. Let $G_t, a\leq t\leq b$, be any differentiable curve in $F^+(E)$ and $H$ any fixed point of $F^+(E)$. From the proof of Lemma \ref{2.2}, we have
   $$r\int_{a}^{b}(\int_{P(E)/M}v_t\Xi^{r-1})dt+Q_1(G_a,H)-Q_1(G_b,H)\in \p A^{0,1}+\b{\p}A^{1,0},$$
   $$(r+1)r\int_{a}^{b}(\int_{P(E)/M}v_t\Psi\Xi^{r-1})dt+Q_2(G_a,H)-Q_2(G_b,H)\in \p A^{0,1}+\b{\p}A^{1,0},$$
  $$-r^2\int_{a}^{b}\left(\int_{P(E)/M}v_t\Xi^{r-1}\int_{P(E)/M}\Psi\Xi^{r-1}\right)dt+Q_3(G_a,H)-Q_3(G_b,H)
     \in \p A^{0,1}+\b{\p}A^{1,0}.$$
Differentiating the above formulas with respect to $b$, we obtain the following formulas:
\begin{align}\label{3.1}
     \p_t Q_1(G_t,H)= r\int_{P(E)/M}v_t\Xi^{r-1};
   \end{align}
   \begin{align}\label{3.2}
   \p_t Q_2(G_t,H)\equiv (r+1)r\int_{P(E)/M}v_t\Psi\Xi^{r-1}\quad \mod \quad \p A^{0,1}+\b{\p}A^{1,0};
   \end{align}
   \begin{align}\label{3.3}
   \p_t Q_3(G_t,H)&\equiv -r^2\int_{P(E)/M}v_t\Xi^{r-1}\int_{P(E)/M}\Psi\Xi^{r-1}\quad \mod \quad \p A^{0,1}+\b{\p}A^{1,0}.
 \end{align}
Therefore, one has
 \begin{align*}
   \frac{1}{r}\frac{d \mathcal{L}(G_t,H)}{d t}&=\int_M \left[(r+1)\int_{P(E)/M}v_t\Psi\Xi^{r-1}-r\int_{P(E)/M}v_t\Xi^{r-1}\int_{P(E)/M}\Psi\Xi^{r-1}\right.\\
   &\left.-\frac{\lambda}{n}\int_{P(E)/M}v_t\Xi^{r-1}\omega\right]\wedge\frac{\omega^{n-1}}{(n-1)!}\\
   &=\int_M \left[(r+1)\int_{P(E)/M}v_t tr_{\omega}\Psi\Xi^{r-1}-r\int_{P(E)/M}v_t\Xi^{r-1}\int_{P(E)/M}tr_{\omega}\Psi\Xi^{r-1}\right.\\
   &\left.-\lambda\int_{P(E)/M}v_t\Xi^{r-1}\right]\frac{\omega^{n}}{n!}\\
   &=\int_{M} \left[(r+1)\int_{P(E)/M}v_t (tr_{\omega}\Psi-\lambda)\Xi^{r-1}\right.\\
   &\left.-r\int_{P(E)/M}v_t\Xi^{r-1}\int_{P(E)/M}(tr_{\omega}\Psi-\lambda)\Xi^{r-1}\right]\frac{\omega^{n}}{n!},
 \end{align*}
from which we get the first variation formula of the Donaldson type functional $\mathcal{L}$ on $F^+(E)$:
 \begin{align}\label{2.8888}
 \frac{d \mathcal{L}(G_t,H)}{d t}=r(v_t G, (tr_{\omega}\Psi-\lambda)G).
 \end{align}

Now we compute the second variation  of $L(G_t,H)$. By a direct computation, we have
\begin{align}\label{8.1}
  \begin{split}
    &\frac{1}{r}\frac{d^2 \mathcal{L}(G_t,H)}{d t^2}=\int_M\left[(r+1)\int_{P(E)/M}\p_tv_t(tr_{\omega}\Psi-\lambda)\Xi^{r-1}\right.\\
    &+(r+1)\int_{P(E)/M}v_t\p_t((tr_{\omega}\Psi-\lambda)\Xi^{r-1})-r\int_{P(E)/M}\p_tv_t\Xi^{r-1}\int_{P(E)/M}(tr_{\omega}\Psi-\lambda)\Xi^{r-1}\\
    &\left.-r\int_{P(E)/M}v_t\p_t\Xi^{r-1}\int_{P(E)/M}(tr_{\omega}\Psi-\lambda)\Xi^{r-1}
    -r\int_{P(E)/M}v_t\Xi^{r-1}\int_{P(E)/M}\p_t\left((tr_{\omega}\Psi-\lambda)\Xi^{r-1}\right)\right]\frac{\omega^n}{n!}.
  \end{split}
\end{align}
We compute the terms in (\ref{8.1}) one by one:
\begin{align}\label{8.2}
  \begin{split}
    &\int_{M}\left[\int_{P(E)/M}v_t\p_t\left((tr_{\omega}\Psi-\lambda)\Xi^{r-1}\right)\right]\frac{\omega^n}{n!}
    =\int_{M}\left[\int_{P(E)/M}-\frac{2\pi}{r}tr_{\omega}v_t\p_t\Xi^r-\lambda\int_{P(E)/M}v_t\p_t\Xi^{r-1}\right]\frac{\omega^n}{n!}\\
    &=\int_{M}\left[tr_{\omega}\int_{P(E)/M}\sqrt{-1}\p v_t\wedge\b{\p}v_t\Xi^{r-1}-\lambda\int_{P(E)/M}v_t\p_t\Xi^{r-1}\right]\frac{\omega^n}{n!}\\
    &=\int_{P(E)}\left(\left\|\p^{\mathcal{H}}v_t\right\|^2-(tr_{\omega}\Psi-\lambda)\left\|\p^{\mathcal{V}}v_t\right\|^2\right)\Xi^{r-1}\wedge\frac{\omega^{n}}{n!},
  \end{split}
\end{align}
where the last equality comes from (\ref{1.aaa}) and (\ref{1.bbb});
\begin{align}\label{8.3}
  \begin{split}
    &\int_{M}\left[\int_{P(E)/M}v_t\Xi^{r-1}\int_{P(E)/M}\p_t\left((tr_{\omega}\Psi-\lambda)\Xi^{r-1}\right)\right]\frac{\omega^n}{n!}\\
    &=\int_{M}tr_\omega\left[\int_{P(E)/M}\p v_t\Xi^{r-1}\int_{P(E)/M}\b{\p}v_t\Xi^{r-1}\right]\frac{\omega^n}{n!}\\
    &=\int_{M}g^{\alpha\b{\beta}}\left(\int_{P(E)/M}\frac{\delta v_t}{\delta z^{\alpha}}\Xi^{r-1}\int_{P(E)/M}\frac{\delta v_t}{\delta \b{z}^{\beta}}\Xi^{r-1}\right)\frac{\omega^n}{n!}\\
&=\int_{M}\left\|\int_{P(E)/M}\p^{\mc{H}}v_t\Xi^{r-1}\right\|^2\frac{\omega^n}{n!},
  \end{split}
\end{align}
where the last equality comes from (\ref{1.22233});
\begin{align}\label{8.4}
  \begin{split}
    &\int_{M}\left[\int_{P(E)/M}v_t\p_t\Xi^{r-1}\int_{P(E)/M}(tr_{\omega}\Psi-\lambda)\Xi^{r-1}\right]\frac{\omega^n}{n!}\\
    &=(r-1)\int_{M}\left[\int_{P(E)/M}v_t\frac{\sqrt{-1}}{2\pi}\p\b{\p}v_t\Xi^{r-2}\int_{P(E)/M}
    (tr_{\omega}\Psi-\lambda)\Xi^{r-1}\right]\frac{\omega^n}{n!}\\
    &=-\int_{M}\left[\int_{P(E)/M}\left\|\p^\mathcal{V}v_t\right\|^2\Xi^{r-1}\int_{P(E)/M}(tr_{\omega}\Psi-\lambda)\Xi^{r-1}\right]\frac{\omega^n}{n!}.
  \end{split}
\end{align}
So we have from (\ref{8.1})-(\ref{8.4}):
\begin{align*}
  \begin{split}
    &\frac{1}{r}\frac{d^2 \mathcal{L}(G_t,H)}{d t^2}=\int_M\left[(r+1)\int_{P(E)/M}\p_tv_t(tr_{\omega}\Psi-\lambda)\Xi^{r-1}
    +(r+1)\int_{P(E)/M}\left\|\p^{\mc{H}}v_t\right\|^2\Xi^{r-1}\right.\\
    &\left.-(r+1)\int_{P(E)/M}\left\|\p^\mathcal{V}v_t\right\|^2(tr_{\omega}\Psi-\lambda)\Xi^{r-1}-r\int_{P(E)/M}\p_t v_t\Xi^{r-1}\int_{P(E)/M}(tr_{\omega}\Psi-\lambda)\Xi^{r-1}\right.\\
    &\left.+r\int_{P(E)/M}\left\|\p^\mathcal{V}v_t\right\|^2\Xi^{r-1}\int_{P(E)/M}(tr_{\omega}\Psi-\lambda)\Xi^{r-1}
    -r\left\|\int_{P(E)/M}\p^{\mc{H}}v_t\Xi^{r-1}\right\|^2\right]\frac{\omega^n}{n!}\\
    &=\left((\p_tv_t-\left\|\p^\mathcal{V}v_t\right\|^2)G, (tr_{\omega}\Psi-\lambda)G\right)
    +\int_{M}|(\p^\mathcal{H}v_t)G|^2_{\omega,G}\frac{\omega^n}{n!}\\
    &=\left((\p_tv_t-\left\|\p^\mathcal{V}v_t\right\|^2)G, (tr_{\omega}\Psi-\lambda)G\right)+\left\|(\p^{\mathcal{H}}v_t)G\right\|_G^2.
  \end{split}
\end{align*}
Here,
\begin{align}
  (\p^{\mathcal{H}}v_t)G=\frac{\delta v_t}{\delta z^{\alpha}}dz^{\alpha}\otimes G
\end{align}
 can be viewed as an element of $\tilde{\mathcal{H}}^*\otimes F^+E$, ($\tilde{\mathcal{H}}^*$ denotes the dual bundle of the horizontal tangent bundle $\tilde{\mathcal{H}}$), which has a natural fiberwise metric induced from $\omega$ and $\langle\cdot,\cdot\rangle_G$, i.e.,
\begin{align}\label{1.111}
\begin{split}
  |(\p^\mathcal{H}v_t)G|^2_{\omega,G}&:=(r+1)\int_{P(E)/M}g^{\alpha\b{\beta}}\frac{\delta v_t}{\delta z^{\alpha}}\frac{\delta v_t}{\delta \b{z}^{\beta}}\Xi^{r-1}\\
  &\quad-r g^{\alpha\b{\beta}}\left(\frac{\p}{\p z^{\alpha}}\int_{P(E)/M} v_t\Xi^{r-1}\right)
  \left(\frac{\p}{\p \b{z}^{\beta}}\int_{P(E)/M} v_t\Xi^{r-1}\right)\\
  &=(r+1)\int_{P(E)/M}g^{\alpha\b{\beta}}\frac{\delta v_t}{\delta z^{\alpha}}\frac{\delta v_t}{\delta \b{z}^{\beta}}\Xi^{r-1}\\
  &\quad-r g^{\alpha\b{\beta}}\left(\int_{P(E)/M} \frac{\delta v_t}{\delta z^{\alpha}}\Xi^{r-1}\right)
  \left(\int_{P(E)/M} \frac{\delta v_t}{\delta \b{z}^{\beta}}\Xi^{r-1}\right)\geq 0,
\end{split}
\end{align}
where the second equality holds since
\begin{align}
\begin{split}
  &\quad g^{\alpha\b{\beta}}\left(\frac{\p}{\p z^{\alpha}}\int_{P(E)/M} v_t\Xi^{r-1}\right)
  \left(\frac{\p}{\p \b{z}^{\beta}}\int_{P(E)/M} v_t\Xi^{r-1}\right)\\
  &=tr_{\omega}\left(\p\int_{P(E)/M}v_t\Xi^{r-1}\right)\left(\b{\p}\int_{P(E)/M}v_t\Xi^{r-1}\right)\\
  &=tr_{\omega}\int_{P(E)/M}\p v_t\Xi^{r-1}\int_{P(E)/M}\b{\p} v_t \Xi^{r-1}\\
  &=g^{\alpha\b{\beta}}\left(\int_{P(E)/M} \frac{\delta v_t}{\delta z^{\alpha}}\Xi^{r-1}\right)
  \left(\int_{P(E)/M} \frac{\delta v_t}{\delta \b{z}^{\beta}}\Xi^{r-1}\right).
\end{split}
\end{align}

That is, the second variation formula of the Donaldson type functional $\mathcal{L}$ is given by
\begin{align}\label{8.8}
\frac{d^2 \mathcal{L}(G_t,H)}{d t^2}=r\left((\p_tv_t-\left\|\p^\mathcal{V}v_t\right\|^2)G, (tr_{\omega}\Psi-\lambda)G\right)+r\left\|(\p^{\mathcal{H}}v_t)G\right\|_G^2.
\end{align}

Using the first and second variation formulas (\ref{2.8888}) and (\ref{8.8}), we obtain the following
\begin{thm}\label{p.1} For fixed $H\in F^+(E)$, the metric $G\in F^+(E)$ is a critical point of the Donadlson type functional $\mathcal{L}(G)=\mathcal{L}(G,H)$ on $F^+(E)$ if and only if $tr_{\omega}\Psi-\lambda=0$, i.e., $G$ is a Finsler-Einstein structure on $E$.
Moreover, the Donadlson type functional $\mathcal{L}(G,H)$ attains a local minimum
at each Finsler-Einstein metric $G$ on $E$.
\end{thm}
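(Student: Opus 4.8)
The plan is to read everything off the first and second variation formulas \eqref{2.8888} and \eqref{8.8}, together with the positive-definiteness of the $L^2$-metric $(\cdot,\cdot)_G$ and the geodesic approximation Lemma \ref{ggg}. For the characterization of critical points, fix $H\in F^+(E)$ and $G\in F^+(E)$. Since $F^+(E)$ is open in $F(E)$, for any $V\in F(E)$ the path $G_t=G+tV$ lies in $F^+(E)$ for $|t|$ small, and $v_0:=\partial_t\log G_t|_{t=0}=V/G$ ranges over all of $F(E)$ as $V$ does; by \eqref{2.8888},
\[
\left.\frac{d}{dt}\mathcal{L}(G_t,H)\right|_{t=0}=r\bigl(v_0G,\,(tr_\omega\Psi-\lambda)G\bigr)_G,\qquad \Psi=\Psi(G).
\]
Hence $G$ is critical if and only if this pairing vanishes for every $v_0\in F(E)$, which by the positive-definiteness of $(\cdot,\cdot)_G$ recorded after \eqref{1.4} is equivalent to $(tr_\omega\Psi-\lambda)G=0$ in $F(E)$ --- note $(tr_\omega\Psi-\lambda)G$ is itself an element of $F(E)$, being continuous on $E$, smooth on $E^o$, and fibrewise $2$-homogeneous. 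Since $G>0$ on $E^o$ this says exactly $tr_\omega\Psi=\lambda$, i.e.\ $G$ is Finsler-Einstein.

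Now suppose $G$ is Finsler-Einstein and let $G_t$ be any smooth path with $G_0=G$. Because $tr_\omega\Psi(G_0)-\lambda=0$, the first derivative \eqref{2.8888} vanishes at $t=0$ and the first term on the right of \eqref{8.8} also vanishes there, so
\[
\left.\frac{d^2}{dt^2}\mathcal{L}(G_t,H)\right|_{t=0}=r\bigl\|(\partial^{\mathcal{H}}v_0)G\bigr\|_G^2\ \ge\ 0 .
\]
This already gives an infinitesimal minimum; to obtain a genuine (in fact absolute) minimum, join $G=G_0$ to an arbitrary $G_1\in F^+(E)$ by the approximate geodesics $G_{t,\epsilon}$ of Lemma \ref{ggg}, which solve \eqref{ma}, so $\partial_tv_t-\|\partial^{\mathcal{V}}v_t\|^2=\epsilon\,\det h_0/\det h_t>0$, with fixed endpoints $G_{0,\epsilon}=G$ and $G_{1,\epsilon}=G_1$. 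Substituting into \eqref{8.8},
\[
\frac{d^2}{dt^2}\mathcal{L}(G_{t,\epsilon},H)=r\Bigl(\tfrac{\epsilon\det h_0}{\det h_t}\,G,\,(tr_\omega\Psi-\lambda)G\Bigr)_{G_{t,\epsilon}}+r\bigl\|(\partial^{\mathcal{H}}v_t)G\bigr\|_{G_{t,\epsilon}}^2\ \ge\ -C\epsilon ,
\]
with $C$ independent of $t\in[0,1]$ and of small $\epsilon>0$: the last term is nonnegative, and the first is $O(\epsilon)$ by the uniform bound $|\log(G_{t,\epsilon}/G_0)|\le C$ of Lemma \ref{ggg} (together with the higher-order a priori estimates from its proof, which control $\det h_0/\det h_t$, $tr_\omega\Psi(G_{t,\epsilon})-\lambda$, and the inner product $(\cdot,\cdot)_{G_{t,\epsilon}}$). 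Since $G_{0,\epsilon}=G$ is Finsler-Einstein, $\frac{d}{dt}\mathcal{L}(G_{t,\epsilon},H)\big|_{t=0}=0$ by \eqref{2.8888}, so integrating the last inequality twice over $[0,1]$ yields $\mathcal{L}(G_1,H)-\mathcal{L}(G,H)=\mathcal{L}(G_{1,\epsilon},H)-\mathcal{L}(G_{0,\epsilon},H)\ge-\tfrac{C}{2}\epsilon$; letting $\epsilon\to0$ gives $\mathcal{L}(G_1,H)\ge\mathcal{L}(G,H)$ for every $G_1\in F^+(E)$, hence the asserted (local --- indeed absolute) minimality.

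The formal content above --- the identification of critical points and the nonnegativity of the second variation --- is immediate from \eqref{2.8888}--\eqref{8.8}. The real difficulty is that smooth geodesics in $F^+(E)$ need not exist, forcing the use of the approximate geodesics of Lemma \ref{ggg}; one must then (i) check that $t\mapsto\mathcal{L}(G_{t,\epsilon},H)$ is $C^2$ with \eqref{8.8} valid along these smooth curves, and (ii) extract the uniform $O(\epsilon)$ bound on the anomalous term $r\bigl(\tfrac{\epsilon\det h_0}{\det h_t}G,(tr_\omega\Psi-\lambda)G\bigr)_{G_{t,\epsilon}}$ from the a priori estimates behind the family Monge-Amp\`ere Lemma \ref{ggg}. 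Step (ii) is the main obstacle, and it is precisely where the family version of Chen's (and Semmes') analysis of the geodesic equation developed in Section \ref{space} is indispensable.
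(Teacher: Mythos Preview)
Your characterization of critical points via \eqref{2.8888} and the nondegeneracy of $(\cdot,\cdot)_G$ is essentially the paper's argument (the paper simply takes the single variation $v_0=-(tr_\omega\Psi-\lambda)$ rather than all variations). For the local minimum, the paper's proof of this theorem stops at the line you call an ``infinitesimal minimum'': it records \eqref{5.55}, namely $\frac{d^2}{dt^2}\mathcal{L}(G_t,H)\big|_{t=0}=r\|(\partial^{\mathcal{H}}v_0)G\|_G^2\ge0$, and declares this the local minimum. The approximate-geodesic argument you then launch is not part of Theorem~\ref{p.1} at all; it is the content of the \emph{next} theorem (Theorem~\ref{p.2}, absolute minimum), which the paper proves separately.

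That said, your attempt at the absolute minimum contains a real gap in step~(ii). You bound the anomalous term by invoking ``higher-order a priori estimates from the proof of Lemma~\ref{ggg}'' to control $tr_\omega\Psi(G_{t,\epsilon})$ uniformly in $\epsilon$. But the family Monge--Amp\`ere equation \eqref{ma} is a fibrewise equation with $z\in M$ as a parameter; Chen's estimates yield $C^{1,1}$ control in the fibre and $t$ variables, while regularity in $z$ comes only from smooth dependence on parameters and is \emph{not} uniform as $\epsilon\to0$. Since $tr_\omega\Psi$ involves second \emph{horizontal} derivatives of $\log G_{t,\epsilon}$, you cannot bound it this way. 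The paper avoids this entirely: after substituting \eqref{ma} it reduces the offending term to $\epsilon$ times $\int_{P(E)}tr_\omega\Psi_{t,\epsilon}\,\omega_{FS}^{r-1}(h_0)\wedge\omega^n$, then integrates by parts in the $z$-variables against a partition of unity to move $\partial_\alpha\partial_{\bar\beta}$ off $\log G_{t,\epsilon}$ and onto fixed data, so that only the $C^0$ bound $|\log(G_{t,\epsilon}/G_0)|\le C$ from Lemma~\ref{ggg} is needed. That integration-by-parts step is the missing idea in your argument.
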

\begin{proof} Clearly, if $G$ is a Finsler-Einstein structure on $E$, i.e. $tr_{\omega}\Psi-\lambda=0$, then it is a critical point of $\mathcal{L}(G,H)$ by the equation (\ref{2.8888}). Moreover, at a Finsler-Einstein metric $G$ on $E$, by (\ref{8.8}), one has
\begin{align}\label{5.55}
\frac{d^2\mathcal{L}(G_t,H)}{d t^2}|_{t=0}=r\left\|(\p^{\mathcal{H}}v_t)G\right\|_G^2\geq 0,
\end{align}
that is, the functional $\mathcal{L}(G,H)$ attains a local minimum. Conversely, let $G$ be a critical point of $\mathcal{L}(G,H)$. By choosing the variation
 \begin{align}\label{2.1111}
   v_t=\p_t\log G=-(tr_{\omega}\Psi-\lambda),
 \end{align}
we get
$$0=\frac{d\mathcal{L}(G_t,H)}{d t}=-r((tr_{\omega}\Psi-\lambda)G,(tr_{\omega}\Psi-\lambda)G)\leq 0,$$
and from which we have
$$tr_{\omega}\Psi-\lambda=0,$$
that is, the metric $G$ is a Finsler-Einstein metric.
 \end{proof}
In fact, the Donaldson type functional $\mathcal{L}(G,H)$ attains the absolute minimum at all Finsler-Einstein metrics on $E$.
\begin{thm}\label{p.2} For a fixed Finsler metric $H\in F^+(E)$, the Donaldson type functional $\mathcal{L}(G,H)$ on the space $F^+(E)$ attains the absolute minimum at all Finsler-Einstein metrics on $E$.
\end{thm}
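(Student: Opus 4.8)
# Proof Proposal for Theorem \ref{p.2}

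The plan is to upgrade the local minimum result of Theorem \ref{p.1} to a global one by exploiting the second variation formula (\ref{8.8}) along the \emph{approximate geodesics} provided by Lemma \ref{ggg}, in the spirit of Donaldson's original convexity argument. The key structural observation is that along a genuine geodesic $G_t$ in $F^+(E)$ — i.e. a solution of (\ref{1.10}), $\p_t v_t - \|\p^{\mathcal V} v_t\|^2 = 0$ — the first term on the right-hand side of (\ref{8.8}) vanishes identically, leaving
\begin{align*}
\frac{d^2 \mathcal{L}(G_t,H)}{dt^2} = r\left\|(\p^{\mathcal H} v_t)G\right\|_G^2 \geq 0,
\end{align*}
so that $\mathcal{L}(G_t,H)$ is a convex function of $t$ along geodesics. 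Combined with the first variation formula (\ref{2.8888}), which shows $\frac{d}{dt}\mathcal{L}(G_t,H)|_{t=0} = 0$ when $G_0$ is Finsler-Einstein (since then $tr_\omega\Psi - \lambda = 0$), convexity forces $\mathcal{L}(G_1,H) \geq \mathcal{L}(G_0,H)$ for the other endpoint $G_1$ of the geodesic, which is the desired absolute minimum statement (after using the cocycle identity (\ref{2.8}) to reduce to $H = G_0$).

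The steps I would carry out, in order, are as follows. First, fix a Finsler-Einstein metric $G_0$ and an arbitrary $G_1 \in F^+(E)$; by (\ref{2.8}) it suffices to show $\mathcal{L}(G_1, G_0) \geq 0 = \mathcal{L}(G_0,G_0)$. Second, invoke Lemma \ref{ggg} to obtain, for each small $\epsilon > 0$, a smooth path $G_{t,\epsilon}$ in $F^+(E)$ solving the Monge-Amp\`ere equation (\ref{ma}) with endpoints $G_0, G_1$, together with the uniform bound $|\log(G_{t,\epsilon}/G_0)| \leq C$. Third, compute $\frac{d^2}{dt^2}\mathcal{L}(G_{t,\epsilon}, G_0)$ from (\ref{8.8}): the first term is now $r\big((\p_t v_t - \|\p^{\mathcal V} v_t\|^2)G, (tr_\omega\Psi - \lambda)G\big)$ where $\p_t v_t - \|\p^{\mathcal V}v_t\|^2 = \epsilon \,(\det h_0)/(\det h_t)$ by (\ref{ma}), hence is positive and controlled; so
\begin{align*}
\frac{d^2 \mathcal{L}(G_{t,\epsilon}, G_0)}{dt^2} \geq r\left(\epsilon\frac{\det h_0}{\det h_t}\,G,\ (tr_\omega\Psi - \lambda)G\right) \geq -C'\epsilon
\end{align*}
for a constant $C'$ independent of $t,\epsilon$ (using the uniform estimates on $G_{t,\epsilon}$ to bound $\det h_t$ from below and $tr_\omega\Psi$ in $L^1$). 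Fourth, estimate the initial first derivative: since $G_{t,\epsilon}$ need not be a true geodesic, $\frac{d}{dt}\mathcal{L}(G_{t,\epsilon},G_0)|_{t=0}$ is not exactly zero, but by (\ref{2.8888}) it equals $r(v_0 G, (tr_\omega\Psi - \lambda)G) = 0$ at $t=0$ \emph{because $G_0$ itself is Finsler-Einstein} — this is the crucial point that makes the argument work regardless of how $G_{t,\epsilon}$ leaves $G_0$. Fifth, integrate the differential inequality twice: $\mathcal{L}(G_1,G_0) = \mathcal{L}(G_{1,\epsilon},G_0) \geq \mathcal{L}(G_0,G_0) + 0 \cdot 1 - C''\epsilon = -C''\epsilon$, and let $\epsilon \to 0$ to conclude $\mathcal{L}(G_1,G_0) \geq 0$.

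The main obstacle I anticipate is the analytic control needed in the third and fourth steps: I must ensure the second variation formula (\ref{8.8}), which was derived for smooth curves, applies to $G_{t,\epsilon}$ (this is fine since Lemma \ref{ggg} gives these smooth), and more delicately that all the integrals appearing — in particular $\int_{P(E)/M}(tr_\omega\Psi - \lambda)\Xi^{r-1}$ and the error term $\epsilon\,\det h_0/\det h_t$ — are bounded uniformly in $t$ and $\epsilon$ so that the $O(\epsilon)$ error genuinely vanishes in the limit. This requires the uniform $C^{1,1}$ (or at least $C^0$-of-$\log$) estimates from Lemma \ref{ggg}, which bound $h_t$ away from degeneracy; one should also check that the boundary contributions from the ``$\mathrm{mod}\ \p A^{0,1} + \bar\p A^{1,0}$'' ambiguities integrate to zero against $\omega^{n-1}/(n-1)!$ over the closed manifold $M$, which they do by Stokes since $M$ has no boundary. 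A secondary subtlety is that, unlike the Hermitian case where $\mathcal{L}$ is convex along the \emph{explicit} smooth geodesics, here one only has approximate geodesics, so the convexity is only approximate — but since we only need a one-sided inequality and a limit, this suffices. Once these estimates are in place, the argument is a direct adaptation of Donaldson's and of the treatment in \cite{Ko3}, Chapter 6.
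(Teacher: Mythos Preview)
Your proposal is correct and follows essentially the same route as the paper's proof: approximate geodesics from Lemma \ref{ggg}, the second variation formula (\ref{8.8}) with its nonnegative $\|(\p^{\mathcal H}v_t)G\|^2$ term dropped, vanishing of the first variation at the Finsler--Einstein endpoint via (\ref{2.8888}), a uniform $O(\epsilon)$ lower bound on the second derivative coming from the $C^0$ estimate $|\log(G_{t,\epsilon}/G_0)|\leq C$ together with an integration-by-parts $L^1$ bound on $tr_\omega\Psi_{t,\epsilon}$, and finally two integrations in $t$ followed by $\epsilon\to 0$. One small remark on your third step: you do not actually need to bound $\det h_{t,\epsilon}$ from below, because in the inner product the factor $\det h_0/\det h_{t,\epsilon}$ cancels exactly against the fiber volume $\omega_{FS}^{r-1}(h_{t,\epsilon})$, leaving integration of $tr_\omega\Psi_{t,\epsilon}-\lambda$ against the \emph{fixed} form $\omega_{FS}^{r-1}(h_0)$ --- this is precisely how the paper arranges the estimate before invoking the partition-of-unity argument.
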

\begin{proof} For any Finsler-Einstein metric $G_0$ on $E$ and any metric $G_1\in F^+(E)$, we have by Lemma \ref{ggg} that for any small $\epsilon>0$, there exists a smooth curve $G_{t,\epsilon}$ in $F^+(E)$ joining $G_0$ and $G_1$ such that
 $$(\p_tv_{t,\epsilon}-\left\|\p^{\mathcal{V}}v_{t,\epsilon}\right\|^2)\det h_{t,\epsilon}=\epsilon\det h_0,\ v_{t,\epsilon}=\p_t\log G_{t,\epsilon}.$$
Moreover, $G_{t,\epsilon}$ converges uniformly to a $C^{1,1}$ solution $G_t$ of the equation (\ref{1.10}).
On the other hand, from (\ref{8.8}), we have

\begin{align}
  \begin{split}
    &\frac{1}{r}\frac{d^2\mc{L}(G_{t,\epsilon},H)}{dt^2}
    \geq \left((\p_tv_{t,\epsilon}-\left\|\p^{\mathcal{V}}v_{t,\epsilon}\right\|^2)G_{t,\epsilon}, (tr_{\omega}\Psi_{t,\epsilon}-\lambda)G_{t,\epsilon}\right)\\
    &=\int_M\left[(r+1)\int_{P(E)/M}\left(\p_tv_{t,\epsilon}-\left\|\p^{\mathcal{V}}v_{t,\epsilon}\right\|^2\right)
    (tr_{\omega}\Psi_{t,\epsilon}-\lambda)\omega^{r-1}_{FS}
    \right.\\
    &\left.-r\int_{P(E)/M}\left(\p_tv_{t,\epsilon}-\left\|\p^{\mathcal{V}}v_{t,\epsilon}\right\|^2\right)\omega^{r-1}_{FS}
    \int_{P(E)/M}(tr_{\omega}\Psi_{t,\epsilon}-\lambda)\omega^{r-1}_{FS}\right]\frac{\omega^n}{n!}\\
    &=\int_M\left[(r+1)\int_{P(E)/M}(tr_{\omega}\Psi_{t,\epsilon}-\lambda)\epsilon\omega^{r-1}_{FS}(h_0)
  -r\int_{P(E)/M}\epsilon\omega^{r-1}_{FS}(h_0)\int_{P(E)/M}(tr_{\omega}\Psi_{t,\epsilon}-\lambda)\omega^{r-1}_{FS}\right]\frac{\omega^n}{n!}\\
  &=(r+1)\epsilon\int_M\int_{P(E)/M}tr_{\omega}\Psi_{t,\epsilon}\omega^{r-1}_{FS}(h_0)\frac{\omega^n}{n!}-(r+1)\epsilon\lambda\text{Vol}(M),
  \end{split}
\end{align}
where ${\rm Vol}(M)=\int_M\frac{\omega^n}{n!}$.
In the following we show that the integral $\int_M\int_{P(E)/M}tr_{\omega}\Psi_{t,\epsilon}\omega^{r-1}_{FS}(h_0)\frac{\omega^n}{n!}$ is bounded below.
For this, we take a small open set $U$ in $P(E)$ with the homogenous coordinate $(z^1,\cdots,z^n,v^1,\cdots, v^r)$. For any smooth function $\rho$ on $P(E)$ with  compact support in $U$, we have for some $C_\rho>0$ that
\begin{align*}
&\int_M\int_{P(E)/M}\rho tr_{\omega}\Psi_{t,\epsilon}\frac{\omega^{r-1}_{FS}(h_0)}{(r-1)!}\frac{\omega^n}{n!}
=\int_U\rho tr_{\omega}\Psi_{t,\epsilon}\frac{\omega^{r-1}_{FS}(h_0)}{(r-1)!}\frac{\omega^n}{n!}\\
  &=\int_U\rho tr_{\omega}\Psi_{t,\epsilon}\det h_0\det g  dV\\
  &=\int_U\rho g^{\alpha\b{\beta}}\left(-\p_{\alpha}\p_{\b{\beta}}\log G_{t,\epsilon}
  +\sum_{i,j=1}^{r-1}(\log G_{t,\epsilon})^{i\b{j}}(\p_{\alpha}\p_{\b{j}}\log G_{t,\epsilon})(\p_i\p_{\b\beta}\log G_{t,\epsilon})\right)\det h_0\det g dV\\
  &\geq-\int_U(\rho g^{\alpha\b{\beta}}\det h_0\det g)\p_{\alpha}\p_{\b{\beta}}\log G_{t,\epsilon} dV\\
  &=-\int_U(\rho g^{\alpha\b{\beta}}\det h_0\det g)\p_{\alpha}\p_{\b{\beta}}\log\frac{G_{t,\epsilon}}{G_0} dV-\int_U(\rho g^{\alpha\b{\beta}}\det h_0\det g)\p_{\alpha}\p_{\b{\beta}}\log G_0 dV\\
  &=-\int_U\left(\p_{\alpha}\p_{\b{\beta}}(\rho g^{\alpha\b{\beta}}\det h_0\det g)\right)\log \frac{G_{t,\epsilon}}{G_0} dV-\int_U(\rho g^{\alpha\b{\beta}}\det h_0\det g)\p_{\alpha}\p_{\b{\beta}}\log G_0 dV\\
  &\geq -C_\rho,
\end{align*}
where
$$dV=\frac{1}{\det h_0\det g}{\frac{\omega^{r-1}_{FS}(h_0)}{(r-1)!}}\frac{\omega^n}{n!}.$$
Now by a partition of unity argument for the compact manifold $P(E)$, one gets easily that
\begin{align}\label{8.10}
\int_M\int_{P(E)/M}tr_{\omega}\Psi_{t,\epsilon}\omega_{FS}^{r-1}(h_0)\frac{\omega^n}{n!}\geq -C_1
\end{align}
holds for some positive constant $C_1$. Hence for any small $\epsilon>0$, we have
\begin{align}\label{8.12}
\frac{d^2 \mc{L}(G_t,H)}{d t^2}\geq -C_2\epsilon
\end{align}
for some positive constant $C_2$. Now since $G_0$ is a Finsler-Einstein metric, we get
\begin{align}\label{8.13}
\frac{d \mc{L}(G_t,H)}{d t}=\int_0^t\frac{d^2\mc{L}(G_t,H)}{d t^2}dt\geq-\epsilon C_2t,
\end{align}
and so
\begin{align}\label{8.14}
\mc{L}(G_1,H)-\mc{L}(G_0,H)=\int_0^1\frac{d\mc{L}(G_t,H)}{d t} dt\geq-\frac{\epsilon C_2}{2}.
\end{align}
Taking $\epsilon\to 0$, we have
\begin{align}\label{8.15}
\mc{L}(G_1,H)\geq \mc{L}(G_0,H).
\end{align}
Finally, if $G_1$ is also a Finsler-Einstein metric on $E$, we get $\mathcal{L}(G_0,H)=\mathcal{L}(G_1, H)$. $\quad\quad\quad\Box$

$$  \ $$

\noindent{\bf The proof of Theorem 0.1:}  Note that for a fixed Hermitian metric $H$ and any Finsler-Einstein metric $G_0$ on $E$, we have by Proposition \ref{pp} and Theorem \ref{p.2},
   \begin{align}\label{8.16}
   \inf_{G\in Herm^+(E)}\mathcal{M}(G,H)=\inf_{G\in Herm^+(E)}\mathcal{L}(G,H)\geq \inf_{G\in F^+(E)}\mathcal{L}(G,H)= \mathcal{L}(G_0,H)>-\infty.
   \end{align}
Thus the Donaldson functional $\mathcal{M}(G,H)$ is bounded below. Now the theorem follows from Theorem 6.10.13 in \cite{Ko3}.
 \end{proof}
\begin{rem} As we point out in the introduction of this paper, a natural question is whether a semi-stable vector bundle $E$ admits a Finsler-Einstein metric. If not, then by Theorem 0.1, the condition of the existence of Finsler-Einstein metrics would be a strong condition for the semi-stability. We may ask what the algebraic-geometric counterpart of the notion of a Finsler-Einstein metric is.
\end{rem}

\end{document}